\documentclass[11pt, twoside, leqno]{article}

\usepackage{amssymb}
\usepackage{amsmath}
\usepackage{amsthm}
\usepackage{color}
\usepackage{mathrsfs}

\allowdisplaybreaks

\pagestyle{myheadings}\markboth{\footnotesize\rm\sc
Der-Chen Chang, Jun Liu, Dachun Yang
and Wen Yuan}
{\footnotesize\rm\sc Haj{\l}asz-Sobolev and Triebel-Lizorkin Spaces}

\textwidth=15cm
\textheight=21cm
\oddsidemargin 0.35cm
\evensidemargin 0.35cm

\parindent=13pt

\def\rr{{\mathbb R}}
\def\rn{{\mathbb{R}^n}}

\def\zz{{\mathbb Z}}

\def\nn{{\mathbb N}}

\def\cg{{\mathcal G}}

\def\cs{{\mathcal S}}

\def\fz{\infty }
\def\az{\alpha}
\def\bz{\beta}

\def\gz{{\gamma}}

\def\lz{\lambda}

\def\vz{\varphi}

\def\lf{\left}
\def\r{\right}

\def\hs{\hspace{0.25cm}}
\def\ls{\lesssim}

\def\noz{\nonumber}
\def\wz{\widetilde}
\def\wh{\widehat}

\def\com{\complement}

\def\loc{{\mathop\mathrm{\,loc\,}}}
\def\supp{\mathop\mathrm{\,supp\,}}

\def\Xint#1{\mathchoice
{\XXint\displaystyle\textstyle{#1}}%
{\XXint\textstyle\scriptstyle{#1}}%
{\XXint\scriptstyle\scriptscriptstyle{#1}}%
{\XXint\scriptscriptstyle\scriptscriptstyle{#1}}%
\!\int}
\def\XXint#1#2#3{{\setbox0=\hbox{$#1{#2#3}{\int}$ }
\vcenter{\hbox{$#2#3$ }}\kern-.6\wd0}}

\def\dashint{\Xint-}

\def\ga{\gamma}

\def\F{{F}_{p,q}^\az}

\def\f{\frac}
\def\vi{\varphi}
\def\({\left(}
\def \){ \right)}

 \def\a{{\alpha}}

\def\lz{{\lambda}}




\def\al{\alpha}



 \def\supp{\operatorname{supp}}

\newcommand{\we}{\wedge}

\newtheorem{theorem}{Theorem}[section]
\newtheorem{lemma}[theorem]{Lemma}

\theoremstyle{definition}
\newtheorem{remark}[theorem]{Remark}
\newtheorem{definition}[theorem]{Definition}
\renewcommand{\appendix}{\par
   \setcounter{section}{0}%
   \setcounter{subsection}{0}%
   \setcounter{subsubsection}{0}%
   \gdef\thesection{\@Alph\c@section}%
   \gdef\thesubsection{\@Alph\c@section.\@arabic\c@subsection}%
   \gdef\theHsection{\@Alph\c@section.}%
   \gdef\theHsubsection{\@Alph\c@section.\@arabic\c@subsection}%
   \csname appendixmore\endcsname
 }

\numberwithin{equation}{section}

\begin{document}

\arraycolsep=1pt

\title{\bf\Large Littlewood-Paley Characterizations of Haj{\l}asz-Sobolev
and Triebel-Lizorkin Spaces via Averages on Balls
\footnotetext{\hspace{-0.35cm} 2010 {\it
Mathematics Subject Classification}. Primary 46E35;
Secondary 42B25, 42B35, 30L99.
\endgraf {\it Key words and phrases.} (Haj{\l}asz-)Sobolev space, Triebel-Lizorkin space,
ball average, difference, Lusin-area function, $g_\lambda^*$-function, Calder\'on reproducing formula,
space of homogeneous type.
\endgraf Der-Chen Chang is supported by the NSF grant DMS-1203845 and Hong Kong
RGC competitive earmarked research grants $\#$601813 and $\#$601410.
This project is also supported by the National
Natural Science Foundation of China
(Grant Nos.~11171027, 11361020 and 11471042),
the Specialized Research Fund for the Doctoral Program of Higher Education
of China (Grant No. 20120003110003) and the Fundamental Research
Funds for Central Universities of China (Grant Nos. 2013YB60
and 2014KJJCA10).
}}
\author{Der-Chen Chang, Jun Liu, Dachun Yang\,\footnote{Corresponding author}\ \
and Wen Yuan}
\date{}
\maketitle

\vspace{-0.7cm}

\begin{center}
\begin{minipage}{13cm}
{\small {\bf Abstract}\quad Let $p\in(1,\infty)$ and $q\in[1,\infty)$. In this article,
the authors characterize the Triebel-Lizorkin space
${F}^\alpha_{p,q}(\mathbb{R}^n)$ with  smoothness order $\alpha\in(0,2)$ via the Lusin-area function and the $g_\lambda^*$-function in terms of
difference between $f(x)$ and its average
$B_tf(x):=\frac1{|B(x,t)|}\int_{B(x,t)}f(y)\,dy$ over a ball
$B(x,t)$ centered at $x\in\mathbb{R}^n$ with radius $t\in(0,1)$. As an application, the authors obtain a series of
characterizations of $F^\alpha_{p,\infty}(\mathbb{R}^n)$ via pointwise inequalities, involving ball averages, in spirit
close to Haj{\l}asz gradients, here an interesting phenomena naturally appears that,
in the end-point case when $\alpha =2$, these pointwise inequalities
characterize the Triebel-Lizorkin spaces $F^2_{p,2}(\mathbb{R}^n)$, while not
$F^2_{p,\infty}(\mathbb{R}^n)$. In particular, some new pointwise
characterizations of Haj{\l}asz-Sobolev spaces via ball averages are obtained.
Since these new characterizations only use ball averages,
they can be used as starting points for developing a theory of
Triebel-Lizorkin spaces with smoothness orders not less than
$1$ on spaces of homogeneous type.
}
\end{minipage}
\end{center}

\vspace{0.2cm}

\section{Introduction}\label{s1}
\hskip\parindent
The theory of function spaces with smoothness is one of central
topics of analysis on metric measure spaces.
In 1996, Haj{\l}asz \cite{h96} introduced the notion of Haj{\l}asz gradients,
which serves as a powerful tool to develop the first order Sobolev spaces on metric
measure spaces. Later
Shanmugalingam \cite{sh00} introduced another kind of the first order
Sobolev space by means of upper gradients.
Via introducing the fractional version of Haj{\l}asz gradients,
Hu \cite{hu03} and Yang \cite{y03} introduced
Sobolev spaces with smoothness order $\al\in(0,1)$ on fractals and
metric measure spaces, respectively. However, how to introduce a suitable and
useful Sobolev space with smoothness order bigger than 1 on metric measure spaces
is still an open problem. Due to the lack
of differential structures on metric measure spaces,
one key step to solve the above problem is to find some suitable substitute of the usual
high order derivatives on metric measures spaces.

Via a pointwise inequality involving the higher order differences,
Triebel \cite{t10,t11} and Haroske and Triebel
\cite{ht11,ht13} obtained some pointwise characterizations, in the spirit of Haj{\l}asz \cite{h96} (see also Hu \cite{h03} and Yang \cite{y03}), of Sobolev spaces on $\rn$
with smoothness order bigger than $1$.
However, it is still unclear how to introduce higher than $1$ order differences on
spaces of homogeneous type. Notice also that, in \cite{llw02}, under \emph{a priori assumption}
on the existence of polynomials, Liu et al. introduced the Sobolev spaces
of higher order on metric measure spaces.
Recently, Alabern et al. \cite{amv}
obtained a new interesting characterization of Sobolev spaces with smoothness order bigger than $1$
on $\rn$ via ball averages, which provides a possible way to introduce higher order Sobolev spaces on metric measure spaces.
The corresponding characterizations for Besov and Triebel-Lizorkin spaces were later considered by Yang et al.   \cite{yyz}.

Via differences involving ball averages, Dai et al. \cite{dgyy} provides several other ways,
which are different from \cite{amv} and in spirit more close to the pointwise
characterization as in \cite{h96,h03,y03}, to
introduce Sobolev spaces of order $2\ell$
on spaces of homogeneous type in the sense of Coifman and Weiss \cite{cw71,cw77}, where $\ell\in\nn:=\{1,2,\ldots\}$.
Moreover, Dai et al. \cite{dgyy0} further characterized Besov and Triebel-Lizorkin
spaces with smoothness order in $(0, 2\ell)$ via differences involving ball averages, which also gave out
a possible way to introduce Besov and Triebel-Lizorkin spaces with any positive smoothness order
on spaces of homogeneous type. In particular, when $\az\in(0,2)$, $p\in(1,\fz)$
and $q\in(1,\fz]$, it was proved in \cite[Theorem 3.1(ii)]{dgyy0} that a locally integrable function $f$ belongs to the  Triebel-Lizorkin space $F^\az_{p,q}(\rn)$
if and only if
\begin{equation}\label{eq1}
\|f\|_{L^p(\rn)}+\lf\|\lf\{\sum_{k=0}^\fz2^{k\az q} |f-B_{2^{-k}}f|^q\r\}^{1/q}\r\|_{L^p(\rn)}<\fz;
\end{equation}
moreover, the quantity in \eqref{eq1} is an equivalent quasi-norm of
$F^\az_{p,q}(\rn)$. Here and hereafter, for any locally integrable function $f$,
$t\in(0,\fz)$ and $x\in\rn$, we let
\begin{equation*}
B_tf(x):= \frac1{|B(x,t)|}\int_{B(x,t)} f(y) \,dy=:\dashint_{B(x,t)} f(y) \,dy,
\end{equation*} and
$B(x,t)$ stand for a ball centered at $x$ with radius $t$.
Observe that this result in \cite[Theorem 3.1(ii)]{dgyy0} can be regarded as the
characterization of $F^\az_{p,q}(\rn)$
via a Littlewood-Paley $\cg$-function involving $f-B_{2^{-k}}f$.
The corresponding result for homogeneous Triebel-Lizorkin spaces was also obtained
in \cite{dgyy0}.

The main purpose of this article is to establish some Lusin-area function and $g_\lambda^*$-function variants of  the above
characterization for Triebel-Lizorkin
spaces ${F}^\alpha_{p,q}(\mathbb{R}^n)$,
which also provide some other possible ways to introduce
Triebel-Lizorkin spaces with smoothness orders not less than $1$ on spaces of homogeneous type.
As an application, we obtain a series of
characterizations of $F^\alpha_{p,\infty}(\mathbb{R}^n)$ for $\az\in (0,2)$ and
$p\in (1,\fz)$ via pointwise inequalities, involving ball averages, in spirit
close to Haj{\l}asz gradients, here an interesting phenomena naturally appears that,
in the end-point case when $\alpha =2$, these pointwise inequalities
characterize the Triebel-Lizorkin spaces $F^2_{p,2}(\mathbb{R}^n)$, while not
$F^2_{p,\infty}(\mathbb{R}^n)$.
Recall that, for $p\in(1,\fz)$, the Haj{\l}asz-Sobolev spaces $M^{\az,p}(\rn)$
coincide with $F^1_{p,2}(\rn)$ when $\az=1$ and with $F^\az_{p,\fz}(\rn)$
when $\az\in(0,1)$ (see \cite{h96,y03} and also Remark \ref{re}(i) below). Thus,
these pointwise characterizations also lead to some new pointwise characterizations of (fractional) Haj{\l}asz-Sobolev
spaces in spirit of \cite{dgyy}, which are different from those obtained in \cite{h96,h03,hu03,y03}.
Recall that the pointwise characterizations of Besov and Triebel-Lizorkin spaces play
important and key roles in the study for the invariance of these function spaces under quasi-conformal mappings;
see, for example, \cite{kyz11,gkz13,hk13,kkss14,bss14}.

To state our main results of this article, we first recall some basic notions. Denote by
$L^1_\loc(\rn)$ the collection of all
locally integrable functions  on $\rn$.
Let  $\mathcal{S}(\rn)$ denote the collection of all
\emph{Schwartz functions} on $\rn$, endowed
with the usual topology, and $\mathcal{S}'(\rn)$ its \emph{topological dual}, namely,
the collection of all bounded linear functionals on $\mathcal{S}(\rn)$
endowed with the weak $\ast$-topology.
Let $\zz_+:=\{0,1,\ldots\}$ and  $\cs_\fz(\rn)$ be the set of all Schwartz functions $\vz$ such that
$\int_\rn x^\gz \vz(x)\,dx=0$ for all $\gz\in \zz_+^n$, and
$\cs'_\fz(\rn)$ its topological dual.
For all $\az\in\zz_+^n$, $m\in\zz_+$ and $\vz\in\cs(\rn)$, let
$$\|\vz\|_{\az,m}:=\sup_{|\bz|\le|\az|,\,x\in\rn}(1+|x|)^m |\partial^\bz \vz(x)|.$$

We also use  $\widehat{\varphi}=\varphi^{\we}$ and $\varphi^{\vee}$ to denote
the \emph{Fourier transform} and the \emph{inverse transform} of $\varphi$,
respectively. For any $\vz\in \cs(\rn)$ and $t\in(0,\fz)$, we let $\vz_t(\cdot):=t^{-n}\vz(\cdot/t)$.
For any $E\subset\rn$, let $\chi_E$ be its \emph{characteristic function}.

The Triebel-Lizorkin spaces are defined as follows (see \cite{t83,t92,FJ90,ysy}).

\begin{definition}\label{d2.1}
Let $\az\in(0,\,\fz)$, $p,\,q\in(0,\,\fz]$, $\vz,\,\Phi\in\cs(\rn)$ satisfy that
\begin{equation}\label{con}
\supp\,\widehat\vz\subset \{\xi\in\rn:\ 1/2\le|\xi|\le2\}\ \mathrm{and}\
|\widehat\vz(\xi)|\ge {\rm constant}>0 \ \mathrm{if}\ 3/5\le|\xi|\le5/3
\end{equation}
and
\begin{equation}\label{con1}
\supp\,\widehat\Phi\subset \{\xi\in\rn:\ |\xi|\le2\}\ \mathrm{and}\
|\widehat\Phi(\xi)|\ge {\rm constant}>0 \ \mathrm{if}\ |\xi|\le5/3.
\end{equation}
The {\it Triebel-Lizorkin space}
$F^\az_{p,\,q}(\rn)$ is defined as the collection of all
$f\in\cs'(\rn)$ such that $\|f\|_{F^\az_{p,\,q}(\rn)}<\fz$,
where, when $p\in(0,\fz)$,
\begin{equation*}
\|f\|_{F^\az_{p,\,q}(\rn)}:=\lf\|\lf[\sum_{k=0}^\fz2^{k\az q}
  |\vz_{2^{-k}}\ast
f|^q\r]^{1/q}\r\|_{L^p(\rn)},
\end{equation*}
and
\begin{equation*}
\|f\|_{F^\az_{\fz,\,q}(\rn)}:=\sup_{x\in\rn}
\sup_{m\in\zz_+}\lf\{\dashint_{B(x,\,2^{-m})}\sum_{k=m}^\fz2^{k\az q}
 |\vz_{2^{-k}}\ast
f(y)|^q\,dy\r\}^{1/q}
\end{equation*}
with $\vz_{2^{-k}}$ when $k=0$ replaced by $\Phi$ and
the usual modification made when $q=\fz$.
\end{definition}

\begin{remark}\label{sr1}
(i) It is well known that the space $F^\az_{p,q}(\rn)$ is independent of the choice of
the pair $(\vz,\Phi)$ satisfying \eqref{con} and \eqref{con1}.

(ii) Let $\Phi$ and $\vz$ be as in Definition \ref{d2.1}.
It is well known that, if
$p\in(0,\,\fz),\,q\in(0,\,\fz]$ and $\az\in(0,\,\fz)$, then, for all $f\in\cs'(\rn)$,
\begin{eqnarray*}
\|f\|_{F^\az_{p,\,q}(\rn)} &&\sim \|\Phi\ast f\|_{L^p(\rn)}+\lf\|\lf[\int_0^1 t^{-\az q}
|\vz_t\ast
f|^q\,\frac{dt}t\r]^{1/q}\r\|_{L^p(\rn)}\\
&&\sim  \|\Phi\ast f\|_{L^p(\rn)}+\lf\|\lf[\int_0^1 t^{-\az q}
\dashint_{B(\cdot,\,t)}\lf|\vz_t\ast
f(y)\r|^q\,dy\,\frac{dt}t\r]^{1/q}\r\|_{L^p(\rn)}\\
 &&\sim  \|\Phi\ast f\|_{L^p(\rn)}+\lf\|\lf\{\int_0^1 t^{-\az q}
\lf[\dashint_{B(\cdot,\,t)}\lf|\vz_t\ast
f(y)\r|\,dy\r]^q\,\frac{dt}t\r\}^{1/q}\r\|_{L^p(\rn)}
\end{eqnarray*}
with equivalent positive constants being independent of $f$;
see, for example, \cite{lsuyy,u12}. Indeed,
the first and the second equivalences can be found in
\cite[Theorem 2.6]{u12}, and the third one follows from a slight modification of the proof of
\cite[Theorem 2.6]{u12}, the details being omitted.

(iii) It is known that, when $p\in(0,\fz)$, $q\in(0,\fz]$ and $\az\in( n\max\{0,1/p-1/q\},1)$,
then $f\in F^\az_{p,q}(\rn)$ if and only if
$$\|f\|_{L^p(\rn)}+\lf\|\lf[\int_0^1 t^{-\az q}
\dashint_{B(\cdot,\,t)}\lf|f(\cdot)-
f(y)\r|^q\,dy\,\frac{dt}t\r]^{1/q}\r\|_{L^p(\rn)}<\fz,$$
which also serves as an equivalent quasi-norm of $F^\az_{p,q}(\rn)$; see
\cite[Section 3.5.3]{t92}.
\end{remark}

The following result is a slight variant of the `continuous'
version of \cite[Theorem 3.1(ii)]{dgyy0} when $p\in(0,\fz)$
and $\ell=1$.

\begin{theorem}\label{t-bf}
Let $p\in(1,\fz),\,q\in(1,\infty]$ and $\a\in (0, 2)$.
Then $f\in F^\az_{p,q}(\rn)$ if and only if $f\in L^p(\rn)$ and
\begin{eqnarray*}
|||f|||_{\F(\rn)}:=\|f\|_{L^p(\rn)}+\left\|\lf[\int_0^1 t^{-\az q}
|f-B_tf|^q\,\frac{dt}t\r]^{1/q}\right\|_{L^p(\rn)}<\fz.
\end{eqnarray*}
Moreover, $|||\cdot|||_{F^\az_{p,q}(\rn)}$ is an equivalent norm of
$F^\az_{p,q}(\rn)$.
\end{theorem}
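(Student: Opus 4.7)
The plan is to derive Theorem \ref{t-bf} from the discrete characterization \cite[Theorem 3.1(ii)]{dgyy0} recalled in \eqref{eq1}. Writing
$$\cg_d f(x):=\lf\{\sum_{k=0}^{\fz}2^{k\az q}|f(x)-B_{2^{-k}}f(x)|^q\r\}^{1/q}\quad\text{and}\quad \cg_c f(x):=\lf\{\int_0^1 t^{-\az q}|f(x)-B_tf(x)|^q\,\frac{dt}{t}\r\}^{1/q},$$
the discrete theorem gives $\|f\|_{F^\az_{p,q}(\rn)}\sim \|f\|_{L^p(\rn)}+\|\cg_d f\|_{L^p(\rn)}$, so the problem reduces to the $L^p$-equivalence $\|\cg_c f\|_{L^p(\rn)}\sim\|\cg_d f\|_{L^p(\rn)}$ modulo terms controllable by $\|f\|_{L^p(\rn)}$.

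For the direction $\|\cg_c f\|_{L^p}\ls \|\cg_d f\|_{L^p}+\|f\|_{L^p}$, I would slice dyadically,
$$\int_0^1 t^{-\az q}|f-B_tf|^q\,\f{dt}{t}\sim \sum_{k=0}^{\fz}2^{k\az q}\int_{2^{-k-1}}^{2^{-k}}|f-B_tf|^q\,\f{dt}{t},$$
and for $t\in[2^{-k-1},2^{-k}]$ apply the triangle inequality $|f-B_tf|\le|f-B_{2^{-k}}f|+|B_{2^{-k}}f-B_tf|$. The second piece is estimated via the key identity
$$B_{2^{-k}}f(x)-B_tf(x)=\dashint_{B(x,2^{-k})}\lf[f(y)-B_tf(x)\r]\,dy,$$
which, after inserting the intermediate averages $B_{2^{-k+1}}f(y)$ and $B_{2^{-k+1}}f(x)$ and using $B(x,t)\subset B(x,2^{-k})\subset B(y,2^{-k+1})$, yields a pointwise bound by shifted Hardy-Littlewood maximal functions of $|f-B_{2^{-j}}f|$ with $|j-k|\le 1$. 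Summing over $k$, integrating in $t$, and invoking the Fefferman-Stein vector-valued maximal inequality in $L^p(\ell^q)$, valid since $p\in(1,\fz)$ and $q\in(1,\fz]$, closes the estimate.

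The reverse direction $\|\cg_d f\|_{L^p}\ls\|\cg_c f\|_{L^p}+\|f\|_{L^p}$ is symmetric: reverse the roles in the triangle inequality, bound $|B_{2^{-k}}f-B_tf|$ by the same maximal function comparison, integrate the resulting inequality over $t\in[2^{-k-1},2^{-k}]$ (which contributes the missing factor of size $2^{-k}$), sum the dyadic scales against the weight $2^{k\az q}$, and apply the same vector-valued inequality. The main obstacle is producing a pointwise bound for $|B_{2^{-k}}f-B_tf|$ that is both sharp enough to close the bootstrap and flexible enough to survive the $L^p(\ell^q)$-machinery: naive applications of the triangle inequality relative to $f$ are circular, and the averaging identity above, together with the shift to the intermediate scale $2^{-k+1}$, is what resolves this circularity. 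Once the comparison is secured, the argument proceeds by the same scheme as the proof of \cite[Theorem 3.1(ii)]{dgyy0}, yielding Theorem \ref{t-bf}.
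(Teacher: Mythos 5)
Your reduction to the discrete characterization hinges entirely on the claimed pointwise bound of $|B_{2^{-k}}f-B_tf|$, for $t\in[2^{-k-1},2^{-k}]$, by shifted maximal functions of $|f-B_{2^{-j}}f|$ with $|j-k|\le 1$, and this is precisely where the argument does not close. Starting from your averaging identity and inserting $B_{2^{-k+1}}f(y)$, the first piece is indeed $\ls M(|f-B_{2^{-k+1}}f|)(x)$, but the residual term is
$$\dashint_{B(x,2^{-k})}B_{2^{-k+1}}f(y)\,dy-B_tf(x),$$
which is again the difference of two averaging operators at comparable scales applied to $f$ --- exactly the type of object you set out to estimate. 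Every further insertion of an intermediate average reproduces such a term at a doubled scale, so the recursion drifts to coarser and coarser scales and never terminates: the shift to $2^{-k+1}$ does not resolve the circularity you yourself identify, it only relocates it. On the Fourier side the obstruction is visible: the relevant multiplier is $[\widehat I(t\xi)-\widehat I(2^{-k}\xi)]/A(2^{-k}|\xi|)$, which decays only like $|\xi|^{-(n+1)/2}$ at infinity, so its kernel admits no integrable radial majorant and hence no pointwise domination by the Hardy--Littlewood maximal function of the kind Lemma \ref{lem-2-2} provides; at best one could attempt a vector-valued Calder\'on--Zygmund argument (uniform in $k$ and in $t\in[2^{-k-1},2^{-k}]$), which you would have to supply and which is not easier than a direct proof.

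The paper does not attempt this transference at all. It proves the continuous statement directly: $f$ is decomposed by the Calder\'on reproducing formula \eqref{crf}; $I-B_s$ acts on each frequency-localized piece through the explicit multiplier $A(s|\xi|)$ of Lemma \ref{lem-2-1}; the two-sided bound $A(s)\sim s^2$ near the origin yields the almost-orthogonality factor $(s/t)^2$ for $s<t$ (the sole source of the restriction $\az\in(0,2)$); and the pieces are summed via H\"older's inequality and the Fefferman--Stein vector-valued maximal inequality. The converse inverts the multiplier through $\eta=\widehat\vz/A(|\cdot|)$, using the lower bound in \eqref{2-5}. To salvage your approach you would need to actually prove the comparison between the continuous and discrete square functions, which in effect amounts to redoing this multiplier analysis; as written, the proposal assumes its hardest step.
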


Recall that the \emph{Hardy-Littlewood maximal operator $M$} is defined by
setting, for all $f\in L^1_\loc(\rn)$,
$$Mf(x):=\sup_{x\in B} \dashint_B |f(y)|\,dy,\quad x\in\rn,$$
where the supremum is taken over all balls $B$ in $\rn$ containing $x$.

\begin{remark}\label{r-a1} Let all the notation be the same as in Theorem \ref{t-bf}.
Then, from the boundedness of the Hardy-Littlewood maximal
function $M$ on $L^p(\rn)$ with $p\in (1,\fz)$,
it is easy to deduce that there exists a positive constant $C$ such that, for
all $p\in(1,\fz),\,q\in(1,\infty]$, $\a\in (0, 2)$ and $f\in L^p(\rn)$,
$$\left\|\lf[\int_1^\fz t^{-\az q}
|f-B_tf|^q\,\frac{dt}t\r]^{1/q}\right\|_{L^p(\rn)}\le C\|Mf\|_{L^p(\rn)}
\le C\|f\|_{L^p(\rn)}.$$
By this, we conclude that
\begin{eqnarray*}
|||f|||_{\F(\rn)}\sim\|f\|_{L^p(\rn)}+\left\|\lf[\int_0^\fz t^{-\az q}
|f-B_tf|^q\,\frac{dt}t\r]^{1/q}\right\|_{L^p(\rn)}
\end{eqnarray*}
with the equivalent positive constants independent of $f$.
This further indicates that Theorem \ref{t-bf} is a natural generalization
of \cite[Theorem 1]{amv} in the sense that \cite[Theorem 1]{amv} coincides with
Theorem \ref{t-bf} in the case $\az=1$ and $q=2$.
We also point out that the method used to show Theorem \ref{t-bf} is similar to
the proof of \cite[Theorem 3.1(ii)]{dgyy0},
but totally different from the proof of \cite[Theorem 1]{amv}.
\end{remark}

The main results of this article are the following characterizations of $\F(\rn)$
via Lusin-area functions (Theorems \ref{apcF2} and \ref{apcF0})
and $g_\lambda^*$- functions (Theorem \ref{apcF1}).

\begin{theorem}\label{apcF2}
Let $p\in(1,\fz),\,q\in(1,\infty],\,r\in[1,q)$ and $\a\in (0,2)$.
Then the following statements are equivalent:

{\rm(i)} $f\in \F(\rn)$;

{\rm(ii)} $f\in L^p(\rn)$ and
\begin{eqnarray*}
|||f|||^{(r)}_{\F(\rn)}&&:=\|f\|_{L^p(\rn)}\\
&&\quad+\lf\|\lf\{\int_0^1 t^{-\az q}
\lf[\dashint_{B(\cdot,\,t)}\lf|f(y)-B_tf(y)\r|^r\,dy\r]^{\frac qr}\,\frac{dt}t\r\}^{1/q}\r\|_{L^p(\rn)}<\fz.\nonumber
\end{eqnarray*}

Moreover, $|||\cdot|||^{(r)}_{F^\az_{p,q}(\rn)}$ is an equivalent norm of
$F^\az_{p,q}(\rn)$.
\end{theorem}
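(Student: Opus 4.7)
In view of Theorem \ref{t-bf}, we have $\|f\|_{\F(\rn)}\sim|||f|||_{\F(\rn)}$, so the task reduces to establishing the two-sided equivalence $|||f|||^{(r)}_{\F(\rn)}\sim|||f|||_{\F(\rn)}$. I handle the two directions separately; both rest on a Calder\'on reproducing formula that rewrites $g_t:=f-B_tf=(\delta_0-\psi_t)*f$ (where $\psi$ denotes the normalized characteristic function of the unit ball, so that $\psi_t*f=B_tf$) as an integral superposition of Littlewood-Paley-type pieces $\varphi_s*f$ with $\varphi\in\cs(\rn)$ satisfying \eqref{con}.

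For the direction $|||f|||^{(r)}_{\F(\rn)}\lesssim|||f|||_{\F(\rn)}$, the assumption $r<q$ allows an application of Jensen's inequality (to the convex function $u\mapsto u^{q/r}$) to yield
$$\lf[\dashint_{B(x,t)}|g_t(y)|^r\,dy\r]^{q/r}\le\dashint_{B(x,t)}|g_t(y)|^q\,dy,$$
which reduces the task to bounding a Lusin-area-type quantity with exponent $q$ by $|||f|||_{\F(\rn)}$. Invoking the Calder\'on decomposition of $g_t$ together with a Peetre-type maximal function estimate---entirely analogous in spirit to the Lusin-area-versus-$g$-function equivalence recorded in Remark \ref{sr1}(ii)---and the Fefferman-Stein vector-valued maximal inequality (valid for $p,q\in(1,\fz)$), I expect to obtain the desired upper bound.

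For the reverse direction $|||f|||_{\F(\rn)}\lesssim|||f|||^{(r)}_{\F(\rn)}$, observe first that the quantity $[\dashint_B|g|^r\,dy]^{1/r}$ is non-decreasing in $r\ge1$; hence $|||f|||^{(r_1)}_{\F(\rn)}\le|||f|||^{(r_2)}_{\F(\rn)}$ whenever $1\le r_1\le r_2<q$, and it suffices to prove the inequality for the smallest case $r=1$. Decomposing $g_t$ via the Calder\'on reproducing formula and applying a Peetre-type inequality of the form $|\varphi_s*f(x)|\lesssim\dashint_{B(x,cs)}|\varphi_s*f(y)|\,dy$ (modulo rapidly decaying tail terms) to each scale piece, I expect to derive a pointwise bound of the form
$$|g_t(x)|\lesssim\sum_{k\in\ZZ}2^{-|k|\ve}M\lf(|g_{2^{-k}t}|\r)(x)$$
for some $\ve>0$ and all $t\in(0,1]$. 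Combining this with the Fefferman-Stein vector-valued maximal inequality (applicable since $r=1<\min\{p,q\}$) and the substitution $s=2^{-k}t$, the resulting geometric series in $k$ converges---crucially, thanks to $\az<2$---yielding $|||f|||_{\F(\rn)}\lesssim|||f|||^{(1)}_{\F(\rn)}\le|||f|||^{(r)}_{\F(\rn)}$.

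The principal technical difficulty lies in the reverse direction: the kernel $\delta_0-\psi_t$ is only polynomially smooth (its Fourier symbol $1-\widehat\psi(t\xi)$ decays only polynomially at infinity and vanishes only quadratically at the origin), so a direct Peetre-type argument does not apply. The Calder\'on reproducing decomposition bypasses this by transferring the estimate onto the smooth building blocks $\varphi_s*f$, at the cost of a scale-dependent sum whose convergence is precisely what forces the restriction $\az\in(0,2)$; tracking the decay rate $\ve$ in the geometric series is the delicate part, and should follow the pattern of \cite[Section 3]{dgyy0}.
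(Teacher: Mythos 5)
Your forward direction has a genuine gap. Applying Jensen's inequality to replace the exponent $r$ by $q$ inside the ball average reduces the problem to bounding the endpoint quantity $\widetilde{|||f|||}_{\F(\rn)}$ of Theorem \ref{apcF0}, and this reduction throws away exactly the advantage that the strict inequality $r<q$ provides. The exponent-$q$ Lusin-area quantity is controlled by $\|f\|_{\F(\rn)}$ only when $p\in[q,\fz)$, or when $p\in(1,q)$ and $\az\in(n(1/p-1/q),1)$; Remark \ref{R1.7x} explicitly records that the case $p\in(1,q)$, $\az\in[1,2)$ is open. The Peetre-type argument you invoke (``analogous to Remark \ref{sr1}(ii)'') works for the band-limited pieces $\vz_t\ast f$ but does not transfer to the non-smooth quantity $f-B_tf$, which is precisely why the restriction in Theorem \ref{apcF0}(i) appears. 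So your route proves the theorem only on a strictly smaller parameter range. The paper instead keeps the exponent $r$: it uses the trivial bound $\dashint_{B(x,t)}|f-B_tf|^r\,dy\le M(|f-B_tf|^r)(x)$ and then applies the Fefferman--Stein vector-valued inequality in $L^{p/r}(L^{q/r})$ (this is where $r<q$ enters), which lands directly on the pointwise quantity $\{\int_0^1 t^{-\az q}|f-B_tf|^q\,\frac{dt}{t}\}^{1/q}$ controlled by Theorem \ref{t-bf} for all $\az\in(0,2)$, $p\in(1,\fz)$, $q\in(1,\fz]$.

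Your reverse direction is also off target as written. First, the quantity to be bounded is $\|f\|_{\F(\rn)}$ via the pieces $\vz_t\ast f$, not $|f-B_tf|$ itself; second, a bound of the form $|f(x)-B_tf(x)|\ls\sum_{k}2^{-|k|\ve}M(|f-B_{2^{-k}t}f|)(x)$ followed by Fefferman--Stein only returns the pointwise $g$-function of $f-B_sf$ again (the maximal function disappears pointwise, not into a ball average at scale $s$), so it cannot produce the averaged quantity $|||f|||^{(1)}_{\F(\rn)}$ on the right-hand side. No multi-scale Calder\'on decomposition or geometric series is needed here: since $(f-B_tf)^{\we}=A(t|\cdot|)\widehat f$ and $A(s)\sim s^2$ on the frequency support of $\widehat\vz(t\cdot)$, one has the exact single-scale identity $\vz_t\ast f=(\eta(t\cdot))^{\vee}\ast(f-B_tf)$ with $\eta:=\widehat\vz/A(|\cdot|)$ a Schwartz function, whence $\dashint_{B(x,t)}|\vz_t\ast f(y)|\,dy\ls M(\dashint_{B(\cdot,t)}|f-B_tf(y)|\,dy)(x)$; the third equivalence of Remark \ref{sr1}(ii), Fefferman--Stein, and H\"older's inequality (to pass from $r=1$ to general $r$) then complete the proof. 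Your reduction to $r=1$ by monotonicity of the $L^r$-averages is correct and is also how the paper concludes.
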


For the case $r=q$, we have the following conclusions.

\begin{theorem}\label{apcF0}
Let $p\in(1,\fz)$ and $q\in(1,\infty]$.

{\rm (i)} If $p\in[q,\fz)$ and
$\a\in (0,2)$, or $p\in(1,q)$ and
$\a\in (n(1/p-1/q),1)$, then $f\in \F(\rn)$ implies that $f\in L^p(\rn)$ and
\begin{eqnarray*}
\widetilde{|||f|||}_{\F(\rn)}:=\|f\|_{L^p(\rn)}+\lf\|\lf[\int_0^1 t^{-\az q}
\dashint_{B(\cdot,\,t)}\lf|f(y)-B_tf(y)\r|^q\,dy\,\frac{dt}t\r]^{1/q}\r\|_{L^p(\rn)}
\end{eqnarray*}
is controlled by $\|f\|_{\F(\rn)}$ modulus a positive constant independent  of $f$.

{\rm (ii)} If $\a\in (0,2)$, then $f\in L^p(\rn)$ and
$\widetilde{|||f|||}_{\F(\rn)}<\fz$ imply that $f\in \F(\rn)$ and $\|f\|_{\F(\rn)}
\le C\widetilde{|||f|||}_{\F(\rn)}$ for some positive constant $C$ independent of $f$.
\end{theorem}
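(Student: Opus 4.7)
For part (ii), the bound follows from an easy application of Jensen's inequality. Since $q\ge 1$, for every $x\in\rn$ and $t\in(0,1)$,
$$\dashint_{B(x,t)}|f(y)-B_tf(y)|\,dy\le\lf[\dashint_{B(x,t)}|f(y)-B_tf(y)|^q\,dy\r]^{1/q},$$
so that $|||f|||^{(1)}_{F^\az_{p,q}(\rn)}\le\widetilde{|||f|||}_{F^\az_{p,q}(\rn)}$. Applying Theorem \ref{apcF2} with $r=1$ (which is admissible since $q>1$) then yields $\|f\|_{F^\az_{p,q}(\rn)}\lesssim|||f|||^{(1)}_{F^\az_{p,q}(\rn)}\le\widetilde{|||f|||}_{F^\az_{p,q}(\rn)}$, giving (ii).

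For part (i), I would split the argument into three subcases according to the ordering of $p$ and $q$. When $p=q$, Fubini's theorem, using the symmetry $\chi_{B(x,t)}(y)=\chi_{B(y,t)}(x)$, identifies $\widetilde{|||f|||}^q_{F^\az_{p,q}(\rn)}$ with the $q$-th power of the norm in Theorem \ref{t-bf}, so this case is immediate. When $p\in(1,q)$ and $\az\in(n(1/p-1/q),1)$, I would use the identity $f(y)-B_tf(y)=\dashint_{B(y,t)}[f(y)-f(z)]\,dz$ together with Jensen's inequality to get $|f(y)-B_tf(y)|^q\le\dashint_{B(y,t)}|f(y)-f(z)|^q\,dz$; then, since $B(y,t)\subset B(x,2t)$ for $y\in B(x,t)$ and $|f(y)-f(z)|^q\lesssim|f(y)-f(x)|^q+|f(x)-f(z)|^q$, averaging in $y\in B(x,t)$ yields
$$\dashint_{B(x,t)}|f(y)-B_tf(y)|^q\,dy\lesssim\dashint_{B(x,2t)}|f(x)-f(z)|^q\,dz,$$
and substituting into $\widetilde{|||f|||}_{F^\az_{p,q}(\rn)}$ reduces the estimate to the difference characterization in Remark \ref{sr1}(iii), whose range of applicability matches the hypothesis precisely.

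The remaining subcase is $p\in(q,\fz)$ with $\az\in(0,2)$. Here I would use the equivalent norm in Remark \ref{sr1}(ii), written as a Lusin-area function of the Littlewood-Paley pieces $\vz_t\ast f$, and compare it with the Lusin-area function of $\psi_t\ast f=f-B_tf$, where $\psi:=\delta_0-\chi_{B(0,1)}/|B(0,1)|$ has two vanishing moments (the zeroth by construction, the first by the symmetry of $\chi_{B(0,1)}$), a property sufficient to characterize $F^\az_{p,q}(\rn)$ for all $\az<2$. Using the Calder\'on reproducing formula $f=\sum_{k\in\zz_+}\vz_{2^{-k}}\ast\widetilde{\vz}_{2^{-k}}\ast f$ and a weighted H\"older/almost-orthogonality estimate on the resulting series, combined with the Fefferman-Stein vector-valued maximal inequality in $L^{p/q}(\rn)$, one would derive $\widetilde{|||f|||}_{F^\az_{p,q}(\rn)}\lesssim\|f\|_{F^\az_{p,q}(\rn)}$; the maximal inequality is applied at the exponent $p/q>1$, which is exactly why the hypothesis $p\in[q,\fz)$ appears.

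The hard part is this third subcase: the exponent $r=q$ in the inner ball average is borderline (Theorem \ref{apcF2} is only proved for $r<q$), so a direct H\"older reduction is unavailable and the proof must route through the Calder\'on reproducing formula to exploit the two vanishing moments of $\psi$ (needed to cover the full range $\az\in(0,2)$). The $L^{p/q}$ boundedness of the Hardy-Littlewood maximal function required at the final step forces $p>q$, which is also why, in the complementary regime $p<q$, one must pay with the smoothness restriction $\az<1$ and resort instead to the difference characterization in Remark \ref{sr1}(iii).
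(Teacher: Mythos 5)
Your treatment of (ii) is exactly the paper's: H\"older/Jensen gives $|||f|||^{(1)}_{\F(\rn)}\le\widetilde{|||f|||}_{\F(\rn)}$ and Theorem \ref{apcF2} with $r=1$ finishes. Likewise your subcase $p\in(1,q)$, $\az\in(n(1/p-1/q),1)$ coincides with the paper's reduction to the difference characterization of Remark \ref{sr1}(iii) via $\dashint_{B(x,t)}|f(y)-B_tf(y)|^q\,dy\ls\dashint_{B(x,2t)}|f(x)-f(z)|^q\,dz$, and your Fubini identity for $p=q$ is correct. The problem is the remaining subcase $p\in(q,\fz)$, $\az\in(0,2)$, which you rightly call the hard part but then do not prove: the passage ``Calder\'on reproducing formula $+$ weighted H\"older/almost-orthogonality $+$ Fefferman--Stein in $L^{p/q}$, one would derive $\widetilde{|||f|||}\ls\|f\|$'' is a plan, not an argument. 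None of the required estimates is supplied: you would need area-function analogues of the multiplier bounds $|T_{s,t}f_t|\ls(s/t)^2Mf_t$ (for $s<t$) and $|T_{s,t}f_t|\ls M(f\ast\psi_t\ast\vz_t)$ (for $t\le s$) from the proof of Theorem \ref{t-bf}, i.e.\ control of $\dashint_{B(\cdot,t)}|T_{t,s}f_s(y)|^q\,dy$, and then a summation of the resulting series inside the $L^{p/q}$ norm; moreover $\psi=\delta_0-\chi_{B(0,1)}/|B(0,1)|$ is not a smooth local mean, so invoking ``two vanishing moments suffice for $\az<2$'' as if it were a standard kernel is not justified without the explicit symbol analysis of Lemmas \ref{lem-2-1}--\ref{lem-2-2}.

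The paper closes this case by an entirely different and much shorter route that bypasses the Littlewood--Paley structure altogether: for \emph{arbitrary} non-negative measurable $F$ on $\rn\times(0,\fz)$ one has the pointwise bound $\mathcal{S}(F)\ls\mathcal{G}^*_\lz(F)$ (Lemma \ref{lem-2-3}(i)) and, for $p\ge q$, the norm bound $\|\mathcal{G}^*_\lz(F)\|_{L^p(\rn)}\ls\|\mathcal{G}(F)\|_{L^p(\rn)}$ (Lemma \ref{lem-2-3}(iii), proved by dualizing $[\mathcal{G}^*_\lz(F)]^q$ against $h\in L^{(p/q)'}(\rn)$ and using $\sup_{t}\int_\rn t^{(\lz-1)n}(t+|x-y|)^{-\lz n}h(x)\,dx\ls Mh(y)$ together with the boundedness of $M$ on $L^{(p/q)'}(\rn)$). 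Taking $F(y,t):=t^{-\az}|f(y)-B_tf(y)|$ and applying Theorem \ref{t-bf} to bound $\|\mathcal{G}(F)\|_{L^p(\rn)}\ls\|f\|_{\F(\rn)}$ finishes the whole range $p\in[q,\fz)$, $\az\in(0,2)$ at once. If you want to salvage your subcase, either reproduce this $g_\lz^*$ comparison (it is the genuinely new ingredient here) or carry out in full the area-function kernel estimates you allude to; as written, the theorem's main case is unproved.
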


\begin{remark}\label{R1.5x}
We point out that the ball averages $\dashint_{B(\cdot,t)}$ in Theorems \ref{apcF2} and \ref{apcF0}
can be replaced by $\dashint_{B(\cdot,\widetilde{C}t)}$ for any fixed positive constant $\widetilde{C}$.
\end{remark}

\begin{theorem}\label{apcF1}
Let $p,\,q\in(1,\infty)$.

{\rm (i)} If $p\in[q,\fz)$ and
$\a\in (0,2)$, or $p\in(1,q)$ and
$\a\in (n(1/p-1/q),1)$, then $f\in \F(\rn)$ implies that $f\in L^p(\rn)$ and
\begin{eqnarray*}
\overline{|||f|||}_{\F(\rn)}
&&:=\|f\|_{L^p(\rn)}\\
&&\quad+\lf\|\lf[\int_0^1 t^{-\az q}\int_{\mathbb  R^n}\lf(\frac{t}{t+|\cdot-y|}\r)^{\lambda
 n}\,|f(y)-B_tf(y)|^q\,\frac{dy\,dt}{t^{n+1}}\r]^{1/q}\r\|_{L^p(\rn)}
\end{eqnarray*}
is controlled by $\|f\|_{\F(\rn)}$ modulus a positive constant independent  of $f$,
where
$$\lz\in(q/\min\{q,p\},\fz).$$

{\rm (ii)} If $\a\in (0,2)$ and $\lz\in(1,\fz)$, then $f\in L^p(\rn)$ and
$\overline{|||f|||}_{\F(\rn)}<\fz$ imply that $f\in \F(\rn)$ and $\|f\|_{\F(\rn)}
\le C\overline{|||f|||}_{\F(\rn)}$ for some positive constant $C$ independent of $f$.
\end{theorem}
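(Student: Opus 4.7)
For (ii), the kernel $(t/(t+|x-y|))^{\lz n}$ is bounded below by $2^{-\lz n}$ on $B(x,t)$, so
\begin{equation*}
\int_{\rn}\lf(\frac{t}{t+|x-y|}\r)^{\lz n}|f(y)-B_tf(y)|^q\,\frac{dy}{t^{n+1}}\gs \frac{1}{t}\dashint_{B(x,t)}|f(y)-B_tf(y)|^q\,dy.
\end{equation*}
Integrating in $t$ and taking the $L^p(\rn)$-norm gives $\widetilde{|||f|||}_{\F(\rn)}\ls \overline{|||f|||}_{\F(\rn)}$, and Theorem~\ref{apcF0}(ii) then yields (ii). The hypothesis $\lz>1$ is used only to guarantee that $\overline{|||f|||}_{\F(\rn)}$ is finite on the relevant function class.

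For (i), my plan is a dyadic decomposition of the $y$-integral about $x$. Setting $g_t:=f-B_tf$ and splitting $\rn=B(x,t)\cup\bigcup_{j\ge 1}[B(x,2^jt)\setminus B(x,2^{j-1}t)]$, the kernel bound $(t/(t+|x-y|))^{\lz n}\ls 2^{-j\lz n}$ on the $j$-th annulus yields
\begin{equation*}
\int_{\rn}\lf(\frac{t}{t+|x-y|}\r)^{\lz n}|g_t(y)|^q\,\frac{dy}{t^n}\ls \sum_{j=0}^{\fz}2^{-j(\lz-1)n}\dashint_{B(x,2^jt)}|g_t(y)|^q\,dy.
\end{equation*}
In the case $p\ge q$, where $q/\min\{p,q\}=1$, each ball-average is controlled by $M(|g_t|^q)(x)$ pointwise, and the geometric sum in $j$ gives $CM(|g_t|^q)(x)$ for any $\lz>1$. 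The remaining $L^p$-estimate can then be closed by duality: pair with $\phi\in L^{(p/q)'}(\rn)$, use Fubini to move the kernel onto $\phi$ as a convolution with the $L^1(\rn)$-function $(1+|\cdot|)^{-\lz n}$, dominate by $C_\lz M\phi$, and invoke the $L^{(p/q)'}$-boundedness of $M$ combined with Theorem~\ref{t-bf}.

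The delicate case will be $p\in(1,q)$, where $\lz>q/p>1$ is strictly stronger than $\lz>1$ and the above duality breaks down because $p/q<1$. Here my plan is to invoke Theorem~\ref{apcF2} with some $r\in[1,p)$ as the baseline characterization, to dominate each dyadic ball-average $\dashint_{B(x,2^jt)}|g_t(y)|^q\,dy$ by a power-maximal quantity of the form $[M^{(r)}g_t(x)]^q$ multiplied by an appropriate geometric factor in $j$, and to close the resulting sum with the vector-valued Fefferman-Stein maximal inequality for $M^{(r)}$ on $L^p(L^q((0,1),dt/t))$, which is valid precisely when $r<\min\{p,q\}=p$. The main obstacle will be producing the pointwise bound on the ball-average with the correct geometric growth so that, after summing in $j$ and letting $r\uparrow p$, the sharp threshold $\lz>q/p=q/\min\{p,q\}$ emerges; a uniform Peetre-type bound on $|g_t(y)|/(1+|y-x|/t)^a$ yields only the weaker condition $\lz>1+q/\min\{p,q\}$, so a finer shell-by-shell argument exploiting the specific structure of $g_t=f-B_tf$ together with the $L^r$-average Lusin-area characterization of Theorem~\ref{apcF2} is required to recover the sharp condition stated in the theorem.
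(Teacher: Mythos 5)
Your treatment of part (ii) and of part (i) in the case $p\in[q,\fz)$ matches the paper: the lower bound of the kernel on the cone reduces (ii) to Theorem \ref{apcF0}(ii), and your duality argument (pairing $[\mathcal{G}_\lz^*]^q$ with $\phi\in L^{(p/q)'}(\rn)$, moving the integrable kernel onto $\phi$, and dominating by $M\phi$) is precisely the paper's Lemma \ref{lem-2-3}(iii). For these pieces the proposal is sound.

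There is, however, a genuine gap in the case $p\in(1,q)$, and you have in effect conceded it yourself: you state that the shell-by-shell argument needed to reach the threshold $\lz>q/p$ ``is required'' but you do not supply it, and the route you sketch cannot produce it. The missing idea is that the sharp gain is \emph{not} a pointwise estimate on each dyadic shell but a change-of-aperture inequality at the level of $L^p$-norms: for $1<p<q$ one has
\begin{equation*}
\lf\|\mathcal{S}_\bz(F)\r\|_{L^p(\rn)}\ls \bz^{n(\frac1p-\frac1q)}\lf\|\mathcal{S}(F)\r\|_{L^p(\rn)},
\end{equation*}
where $\mathcal{S}_\bz$ is the area function of aperture $\bz$. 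This is the paper's Lemma \ref{lem-2-3}(ii), proved by a distribution-function (good-$\lambda$ type) argument comparing the level sets of $\mathcal{S}_\bz(F)$ with those of $\mathcal{S}(F)$ via the sets $E_\mu$ and $U_\mu=\{M(\chi_{E_\mu})>(4\bz)^{-n}\}$; summing $2^{-kn(\lz-1)/q}\cdot 2^{kn(1/p-1/q)}$ over the shells then converges exactly when $\lz>q/p$. Your proposed substitutes do not work: a pointwise domination of $\dashint_{B(x,2^jt)}|g_t|^q\,dy$ by a maximal quantity at $x$ loses the factor $2^{jn(1/p-1/q)}$ in an essentially unavoidable way, and starting from Theorem \ref{apcF2} with $r<p$ controls $[\dashint_B|g_t|^r]^{q/r}$, which by H\"older is \emph{smaller} than the quantity $\dashint_B|g_t|^q$ actually appearing in $\mathcal{G}_\lz^*$; the correct baseline is Theorem \ref{apcF0}(i) with exponent $q$, which is precisely why the restriction $\az\in(n(1/p-1/q),1)$ enters in this case. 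Without the norm-level aperture estimate, your proof of part (i) for $p<q$ is incomplete.
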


\begin{remark}\label{R1.7x}
Observe that there exists a restriction $\az\in (n(1/p-1/q),1)$
in Theorems \ref{apcF0}(i) and \ref{apcF1}(i) when $p\in(1,q)$.
This restriction comes from an application of the Lusin-area characterization
of $\F(\rn)$ involving the first order difference
(see Remark \ref{sr1}(iii)) in the
proofs of Theorems \ref{apcF0}(i) and \ref{apcF1}(i).
We believe that $n(1/p-1/q)$ might be a reasonable
lower bound of $\az$ in Theorems \ref{apcF0}(i) and \ref{apcF1}(i).
However, since we use $f-B_tf$ instead of the forward first order difference
in these two theorems, it might be possible that
Theorems \ref{apcF0}(i) and \ref{apcF1}(i)
remain true when $p\in(1,q)$ and $\az\in [1,2)$, which is still unclear
so far.
\end{remark}

By applying Theorems \ref{t-bf} and \ref{apcF0},  we obtain the following pointwise characterizations of the space
$F^{\az}_{p,\,\fz}(\rn)$ with $\az\in(0,2)$ and $p\in(1,\fz)$ via the average operator $B_t$ in spirit close
to Haj{\l}asz gradients.

\begin{theorem}\label{apcF}
Let $\az\in(0,2)$ and $p\in(1,\fz)$.
Then the following statements are equivalent:

{\rm (i)} $f\in F^{\az}_{p,\,\fz}(\rn)$;

{\rm (ii)} $f\in L^p(\rn)$ and there exist a non-negative $g\in L^p(\rn)$ and a positive constant $C_0$ such that, for all
$t\in(0,1)$ and almost every $x\in\rn$,
\begin{eqnarray*}
\lf|f(x)-B_tf(x)\r|\leq C_0t^{\az}g(x).
\end{eqnarray*}
Moreover, if $\al\in(n/p,1)$, then either of (i) and (ii) is also equivalent to the following:

{\rm (iii)} $f\in L^p(\rn)$ and there exist a non-negative $g\in L^p(\rn)$ and positive constants $C_1,\,C_2$
such that, for all
$t\in(0,1)$ and almost every $x\in\rn$ and $y\in B(x,\,C_1t)$,
\begin{eqnarray*}
\lf|f(x)-B_tf(x)\r|\leq C_2t^{\az}g(y).
\end{eqnarray*}

In any one of the above cases, the function $g$ can be chosen so that $\|f\|_{L^p(\rn)}+\|g\|_{L^p(\rn)}$ is equivalent to $\|f\|_{F^\az_{p,\,\fz}(\rn)}$ with equivalent positive constants independent of $f$.
\end{theorem}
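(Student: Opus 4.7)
The proof splits naturally into three pieces: the equivalence (i)$\Leftrightarrow$(ii), which follows directly from the Littlewood-Paley-type characterization in Theorem \ref{t-bf} applied at $q=\fz$; the short direction (iii)$\Rightarrow$(ii), via the Hardy-Littlewood maximal function; and the harder direction (ii)$\Rightarrow$(iii) requiring $\az\in(n/p,1)$, which invokes Theorem \ref{apcF0}(i) at $q=\fz$.

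For (i)$\Leftrightarrow$(ii), the natural choice of gradient function is
$$g(x):=\sup_{t\in(0,1)}t^{-\az}|f(x)-B_tf(x)|.$$
Applying Theorem \ref{t-bf} with $q=\fz$ (under the usual modification, the inner expression $[\int_0^1 t^{-\az q}|f-B_tf|^q\,dt/t]^{1/q}$ becomes $\sup_{t\in(0,1)}t^{-\az}|f-B_tf|$) gives $\|f\|_{L^p(\rn)}+\|g\|_{L^p(\rn)}\sim\|f\|_{F^\az_{p,\fz}(\rn)}$, which proves both (i)$\Rightarrow$(ii) and the norm equivalence for this $g$. Conversely, any $g$ satisfying the pointwise bound in (ii) dominates $\sup_{t\in(0,1)}t^{-\az}|f-B_tf|$ pointwise, so (ii)$\Rightarrow$(i) is immediate from Theorem \ref{t-bf}.

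For (iii)$\Rightarrow$(ii), I would integrate the pointwise bound in (iii) over $y\in B(x,C_1t)$ to obtain
$$|f(x)-B_tf(x)|\le C_2t^\az\dashint_{B(x,C_1t)}g(y)\,dy\le C_2t^\az Mg(x),$$
where $M$ denotes the Hardy-Littlewood maximal operator. Since $M$ is bounded on $L^p(\rn)$ for $p\in(1,\fz)$, we have $Mg\in L^p(\rn)$ with $\|Mg\|_{L^p(\rn)}\ls\|g\|_{L^p(\rn)}$, so (ii) holds with $Mg$ in place of $g$ and the norm comparison is preserved.

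For (ii)$\Rightarrow$(iii) under $\az\in(n/p,1)$, the plan is to apply Theorem \ref{apcF0}(i) at $q=\fz$: the hypotheses $p\in(1,\fz)=(1,q)$ and $\az\in(n(1/p-1/q),1)=(n/p,1)$ are exactly met, so the theorem yields $\wz g\in L^p(\rn)$ with $\|\wz g\|_{L^p(\rn)}\ls\|f\|_{F^\az_{p,\fz}(\rn)}$ for
$$\wz g(x):=\sup_{t\in(0,1)}t^{-\az}\esssup_{y\in B(x,t)}|f(y)-B_tf(y)|.$$
By the symmetry $y\in B(x,t)\Leftrightarrow x\in B(y,t)$, the very definition of $\wz g(y)$ forces $|f(x)-B_tf(x)|\le t^\az\wz g(y)$ for a.e.\ $y\in B(x,t)$, yielding (iii) with $C_1=C_2=1$ and $g=\wz g$; chaining with (iii)$\Rightarrow$(ii)$\Rightarrow$(i) then closes the norm equivalence. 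The main technical obstacle is making this symmetry argument rigorous in the presence of the essential supremum: one needs a fixed null set outside of which the inequality holds simultaneously for all $t\in(0,1)$ and a.e.\ $y\in B(x,t)$. I expect to handle this by first restricting $t$ to a countable dense subset of $(0,1)$, using Fubini to produce such a null set for rational $t$, and then extending to all $t\in(0,1)$ via the a.e.\ continuity of $t\mapsto B_tf(x)$.
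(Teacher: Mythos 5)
Your proposal is correct and follows essentially the same route as the paper: (i)$\Leftrightarrow$(ii) via Theorem \ref{t-bf} at $q=\fz$ with $g(x)=\sup_{t\in(0,1)}t^{-\az}|f(x)-B_tf(x)|$, (iii)$\Rightarrow$(ii) via the Hardy--Littlewood maximal function, and the hard implication to (iii) via Theorem \ref{apcF0}(i) at $q=\fz$ with the gradient $g(y)=\sup_{t\in(0,1)}\sup_{x\in B(y,t)}t^{-\az}|f(x)-B_tf(x)|$. The null-set/measurability issue you flag at the end is simply passed over in silence in the paper's proof, so your extra care there is harmless but not needed to match the published argument.
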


The characterizations of $F^\az_{p,\,\fz}(\rn)$ in (ii) and (iii) of Theorem \ref{apcF} have several
interesting variants, which are stated as follows.

\begin{theorem}\label{t1-x}
Let $p\in(1,\fz)$.\\
 {\rm a)} If $\al\in(n/p,1)$, then the following statements are equivalent:

{\rm(i)} $f\in F^\az_{p,\,\fz}(\rn)$;

{\rm(ii)} $f\in L^p(\rn)$ and there exist a non-negative $g\in L^p(\rn)$ and positive constants
$C_3$, $C_4$ such that, for all $t\in(0,1)$ and almost every $x\in\rn$,
\begin{equation*}
|f(x)-B_t f(x)|\le C_3 t^\al \dashint_{B(x,C_4t)} g(y)\,dy;
\end{equation*}

{\rm(iii)} $f\in L^p(\rn)$ and there exist $q\in[1,p)$, a non-negative $g\in L^p(\rn)$ and positive constants
$C_5$, $C_6$ such that, for all $t\in(0,1)$ and almost every $x\in\rn$,
\begin{equation*}
|f(x)-B_t f(x)|\le C_5 t^\al \lf\{\dashint_{B(x,C_6t)} [g(y)]^q\,dy\r\}^{1/q}.
\end{equation*}

In any one of the above cases, the function $g$ can be chosen so that $\|f\|_{L^p(\rn)}+ \|g\|_{L^p(\rn)}$ is equivalent to $\|f\|_{F^\az_{p,\,\fz}(\rn)}$ with equivalent positive constants independent of $f$.\\
{\rm b)} If $\al\in(0,2)$, then  (ii) or (iii) in a) implies (i).
\end{theorem}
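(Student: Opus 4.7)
The plan is to establish part a) through the cycle (i)$\Rightarrow$(ii)$\Rightarrow$(iii)$\Rightarrow$(i), and then to observe that the last implication (iii)$\Rightarrow$(i) actually goes through for every $\az\in(0,2)$, which yields part b) at no extra cost. The implication (ii)$\Rightarrow$(iii) is immediate: since $p>1$, I take $q:=1\in [1,p)$ together with the same $g$, so that the right-hand side of (iii) collapses to that of (ii).

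For (i)$\Rightarrow$(ii) under the assumption $\az\in (n/p,1)$, I would invoke Theorem \ref{apcF}(iii), whose conclusion is available precisely for this range of $\az$. That theorem produces a non-negative $g\in L^p(\rn)$ and positive constants $C_1,\,C_2$ with $|f(x)-B_tf(x)|\le C_2 t^{\az}g(y)$ for all $t\in(0,1)$ and a.e.~$x\in\rn$ and $y\in B(x,C_1 t)$. Averaging this inequality in $y$ over $B(x,C_1 t)$ immediately yields (ii) with $C_3:=C_2$ and $C_4:=C_1$, while Theorem \ref{apcF} simultaneously provides $\|f\|_{L^p(\rn)}+\|g\|_{L^p(\rn)}\sim \|f\|_{F^\az_{p,\fz}(\rn)}$.

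The substantive step is (iii)$\Rightarrow$(i), which I will carry out for every $\az\in(0,2)$. Fix $q\in [1,p)$ and a non-negative $g\in L^p(\rn)$ as in (iii). For each $x\in\rn$, $t\in (0,1)$ and a.e.~$y\in B(x,t)$, applying (iii) at $y$ and using the inclusion $B(y,C_6 t)\subset B(x,(1+C_6)t)$, I get
\begin{equation*}
|f(y)-B_tf(y)|\le C_5 t^{\az}\lf\{\dashint_{B(y,C_6 t)}[g(z)]^q\,dz\r\}^{1/q}\ls t^{\az}\lf[M(g^q)(x)\r]^{1/q}.
\end{equation*}
Taking the essential supremum in $y\in B(x,t)$ and then the supremum in $t\in (0,1)$ yields
$\sup_{t\in (0,1)}t^{-\az}\esssup_{y\in B(x,t)}|f(y)-B_tf(y)|\ls [M(g^q)(x)]^{1/q}$ for a.e.~$x$. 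Since $q<p$, the Hardy-Littlewood maximal operator $M$ is bounded on $L^{p/q}(\rn)$, hence $\|[M(g^q)]^{1/q}\|_{L^p(\rn)}=\|M(g^q)\|_{L^{p/q}(\rn)}^{1/q}\ls \|g\|_{L^p(\rn)}$, so that $\widetilde{|||f|||}_{F^\az_{p,\fz}(\rn)}\ls \|f\|_{L^p(\rn)}+\|g\|_{L^p(\rn)}<\fz$. Theorem \ref{apcF0}(ii), read with $q=\fz$ and any $\az\in(0,2)$, then delivers $f\in F^\az_{p,\fz}(\rn)$ with the desired norm control, which also proves part b).

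The main subtlety I anticipate is the correct interpretation of the $q=\fz$ case of Theorem \ref{apcF0}(ii), in which the inner $L^q$-average $[\dashint_{B(\cdot,t)}|\cdot|^q\,dy]^{1/q}$ must be read, by the usual modification, as the essential supremum over $B(\cdot,t)$, together with a careful handling of the exceptional null sets on which the pointwise bound in (iii) may fail so that the essential-supremum bound holds uniformly in $t\in(0,1)$. Once this book-keeping is settled, the rest of the argument reduces to the single vector-valued maximal-function estimate above combined with the Littlewood-Paley characterizations already in hand.
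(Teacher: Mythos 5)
Your proposal is correct and follows essentially the same route as the paper: (ii)$\Rightarrow$(iii) trivially, (iii)$\Rightarrow$(i) for all $\az\in(0,2)$ via the bound $|f(x)-B_tf(x)|\ls t^{\az}[M(g^q)(x)]^{1/q}$ with $[M(g^q)]^{1/q}\in L^p(\rn)$ since $q<p$, and (i)$\Rightarrow$(ii) for $\az\in(n/p,1)$ by feeding Theorem \ref{apcF}(iii) into an average over $B(x,C_1t)$ (the paper passes through $\inf_{y\in B(x,C_1t)}g(y)\le\dashint_{B(x,C_1t)}g$, which is the same thing). The only difference is that the paper closes (iii)$\Rightarrow$(i) by invoking the pointwise characterization, namely the equivalence of (i) and (ii) in Theorem \ref{apcF}, applied directly at the point $x$ with the new gradient $[M(g^q)]^{1/q}$, which avoids the detour through the $q=\fz$ reading of Theorem \ref{apcF0}(ii) and the essential-supremum/null-set bookkeeping you flag as a subtlety.
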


We also have some integral variants of (ii) and (iii) in Theorem \ref{apcF} as follows.

\begin{theorem}\label{t1-xx}
Let $p\in(1,\fz)$. \\
{\rm a)} If $\al\in(n/p,1)$, then the following statements are equivalent:

{\rm(i)} $f\in F^\az_{p,\,\fz}(\rn)$;

{\rm(ii)} $f\in L^p(\rn)$ and  there exist a non-negative $g\in L^p(\rn)$ and positive constants
$C_7$, $C_8$, $C_9$ such that, for all $t\in(0,1)$ and almost every $x\in\rn$,
\begin{eqnarray*}
\sup_{y\in B(x,t)}\lf|f(y)-B_{C_7t} f(y)\r|\le C_8 t^\al \dashint_{B(x,C_9t)} g(y)\,dy;
\end{eqnarray*}

{\rm(iii)} $f\in L^p(\rn)$ and there exist a non-negative $g\in L^p(\rn)$ and positive constants
$C_{10}$, $C_{11}$, $C_{12}$ such that, for all $t\in(0,1)$ and almost every $x\in\rn$,
\begin{equation*}
\dashint_{B(x,t)} |f(y)-B_{C_{10}t} f(y)|\,dy\le C_{11} t^\al \dashint_{B(x,C_{12}t)} g(y)\,dy;
\end{equation*}

{\rm(iv)} $f\in L^p(\rn)$ and there exist $r\in[1,\fz)$, a non-negative $g\in L^p(\rn)$ and positive constants
$C_{13}$, $C_{14}$, $C_{15}$ such that, for all $t\in(0,1)$ and almost every $x\in\rn$,
\begin{equation*}
\lf[\dashint_{B(x,t)}\lf|f(y)-B_{C_{13}t} f(y)\r|^r\,dy\r]^{\frac 1r}\le C_{14} t^\al \dashint_{B(x,C_{15}t)} g(y)\,dy;
\end{equation*}

{\rm(v)} $f\in L^p(\rn)$ and there exist $q\in[1,p)$,
a non-negative $g\in L^p(\rn)$ and positive constants
$C_{16}$, $C_{17}$, $C_{18}$ such that, for all $t\in(0,1)$ and almost every $x\in\rn$,
\begin{equation*}
\sup_{y\in B(x,t)}\lf|f(y)-B_{C_{16}t} f(y)\r|\le C_{17} t^\al \lf\{\dashint_{B(x,C_{18}t)} [g(y)]^q\,dy\r\}^{1/q};
\end{equation*}

{\rm(vi)} $f\in L^p(\rn)$ and there exist $q\in[1,p)$,
a non-negative $g\in L^p(\rn)$ and positive constants
$C_{19}$, $C_{20}$, $C_{21}$ such that, for all $t\in(0,1)$ and almost every $x\in\rn$,
\begin{equation*}
\dashint_{B(x,t)} |f(y)-B_{C_{19}t} f(y)|\,dy\le C_{20} t^\al \lf\{\dashint_{B(x,C_{21}t)} [g(y)]^q\,dy\r\}^{1/q};
\end{equation*}

{\rm(vii)} $f\in L^p(\rn)$ and there exist  $r\in[1,\fz) $, $q\in[1,p)$,
a non-negative $g\in L^p(\rn)$ and positive constants
$C_{22}$, $C_{23}$, $C_{24}$ such that, for all $t\in(0,1)$ and almost every $x\in\rn$,
\begin{equation*}
\lf[\dashint_{B(x,t)}\lf|f(y)-B_{C_{22}t} f(y)\r|^r\,dy\r]^{\frac 1r}\le C_{23} t^\al \lf\{\dashint_{B(x,C_{24}t)} [g(y)]^q\,dy\r\}^{1/q}.
\end{equation*}

In any one of the above cases, the function $g$ can be chosen so that $\|f\|_{L^p(\rn)}+ \|g\|_{L^p(\rn)}$ is equivalent to $\|f\|_{F^\az_{p,\,\fz}(\rn)}$ with equivalent positive constants independent of $f$.\\
{\rm b)} If $\al\in(0,2)$, then any one of the above statements (ii) through (vii) in a) implies (i).
\end{theorem}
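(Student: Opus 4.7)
The plan is to establish part b) first and then combine it with the implication (i) $\Rightarrow$ (ii) to obtain part a). Observe that the statements (ii)--(vii) are linearly ordered by H\"older's inequality: on the left-hand side, the pointwise supremum dominates the $L^r$-average, which in turn dominates the $L^1$-average, while on the right-hand side the $L^q$-average of $g$ dominates the $L^1$-average since $q \ge 1$. Consequently, one obtains (ii) $\Rightarrow$ (iv) $\Rightarrow$ (iii) $\Rightarrow$ (vi) as well as (ii) $\Rightarrow$ (v) $\Rightarrow$ (vii) $\Rightarrow$ (vi), so that (vi) is the weakest of the six pointwise conditions. Therefore it suffices to prove (i) $\Rightarrow$ (ii) for part a) and (vi) $\Rightarrow$ (i) for part b).

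For the implication (i) $\Rightarrow$ (ii) under $\az \in (n/p, 1)$, I would invoke Theorem \ref{apcF}(iii), which supplies a non-negative $g \in L^p(\rn)$ with $\|g\|_{L^p(\rn)} \lesssim \|f\|_{F^\az_{p,\,\fz}(\rn)}$ and positive constants $C_1, C_2$ such that, for every $s \in (0, 1)$, almost every $z \in \rn$, and almost every $w \in B(z, C_1 s)$,
\[
|f(z) - B_s f(z)| \le C_2 s^\az g(w).
\]
Specializing $z$ to an arbitrary $y \in B(x, t)$ and $s$ to $C_7 t$, and observing that $B(y, C_1 C_7 t) \subset B(x, (1 + C_1 C_7) t)$ with comparable Lebesgue measures, I would integrate $w$ over $B(y, C_1 C_7 t)$ and then majorize by an average over the larger ball to obtain, uniformly in $y \in B(x, t)$,
\[
|f(y) - B_{C_7 t} f(y)| \le C_2 (C_7 t)^\az \dashint_{B(y, C_1 C_7 t)} g(w)\, dw \lesssim t^\az \dashint_{B(x, (1 + C_1 C_7) t)} g(w)\, dw.
\]
Taking the supremum in $y \in B(x, t)$ then yields (ii) with $C_9 = 1 + C_1 C_7$ and an appropriate $C_8$.

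For the implication (vi) $\Rightarrow$ (i) under $\az \in (0, 2)$, I would apply the Lusin-area characterization in Theorem \ref{apcF2} (taking $q = \infty$, $r = 1$) in conjunction with Remark \ref{R1.5x}. Since $q \in [1, p)$, the sublinear operator $h \mapsto [M(h^q)]^{1/q}$ is bounded on $L^p(\rn)$, so replacing $\dashint_{B(x, C_{21} t)} g^q$ in (vi) by $M(g^q)(x)$ and taking the supremum in $t \in (0,1)$ gives
\[
\sup_{t \in (0, 1)} t^{-\az} \dashint_{B(x, t)} |f(y) - B_{C_{19} t} f(y)|\, dy \le C_{20}\, [M(g^q)(x)]^{1/q},
\]
whose right-hand side lies in $L^p(\rn)$ with norm controlled by $\|g\|_{L^p(\rn)}$. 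After the change of scale $s = C_{19} t$, the inner average $B_{C_{19} t} f$ becomes the standard $B_s f$ while the outer ball becomes $B(x, s/C_{19})$; by Remark \ref{R1.5x}, the radius factor $1/C_{19}$ can be absorbed, identifying the left-hand side with the $q = \infty$, $r = 1$ instance of $|||\cdot|||^{(r)}_{F^\az_{p, q}(\rn)}$ from Theorem \ref{apcF2}. One thereby concludes $f \in F^\az_{p, \fz}(\rn)$ with $\|f\|_{F^\az_{p, \fz}(\rn)} \lesssim \|f\|_{L^p(\rn)} + \|g\|_{L^p(\rn)}$.

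The hard part will be carrying out the bookkeeping of scales and constants in this last step: one needs to verify that Theorem \ref{apcF2}, in the borderline case $q = \infty$ and $r = 1$, remains valid when $B_t f$ is replaced by $B_{\widetilde C t} f$ and $\dashint_{B(\cdot, t)}$ is replaced by $\dashint_{B(\cdot, \widetilde C' t)}$ for arbitrary positive constants $\widetilde C$ and $\widetilde C'$. With this verification in place, combining (vi) $\Rightarrow$ (i) with the elementary H\"older reductions finishes part b), combining the result with (i) $\Rightarrow$ (ii) finishes part a), and the norm equivalence $\|f\|_{L^p(\rn)} + \|g\|_{L^p(\rn)} \sim \|f\|_{F^\az_{p, \fz}(\rn)}$ follows by tracking constants through each implication.
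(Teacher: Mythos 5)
Your proposal is correct and follows essentially the same route as the paper: the H\"older reductions (ii)$\Rightarrow$(iv)$\Rightarrow$(iii)$\Rightarrow$(vi) and (ii)$\Rightarrow$(v)$\Rightarrow$(vii)$\Rightarrow$(vi), then (i)$\Rightarrow$(ii) via Theorem \ref{apcF}(iii) with the infimum-over-the-ball trick, and (vi)$\Rightarrow$(i) via the bound by $[M(g^q)]^{1/q}\in L^p(\rn)$ combined with Theorem \ref{apcF2}. The scale-bookkeeping issue you flag (replacing $B_tf$ by $B_{\widetilde Ct}f$ in Theorem \ref{apcF2}) is real but routine — a change of variables $s=C_{19}t$ plus Remark \ref{R1.5x} and the trivial bound for $s\ge 1$ as in Remark \ref{r-a1} — and the paper elides it in exactly the same way.
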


\begin{theorem}\label{t1-xxx}
Let $p\in(1,\fz)$. \\
{\rm a)} If $\al\in(n/p,1)$,  then the following statements are equivalent:

{\rm(i)} $f\in F^\az_{p,\,\fz}(\rn)$;

{\rm(ii)} $f\in L^p(\rn)$ and there exist a non-negative $g\in L^p(\rn)$ and positive constants
$C_{25}$, $C_{26}$ such that, for all $t\in(0,1)$ and almost every $x\in\rn$,
\begin{eqnarray*}
\sup_{y\in B(x,t)}\lf|f(y)-B_{C_{25}t} f(y)\r|\le C_{26} t^\al  g(x);
\end{eqnarray*}

{\rm(iii)} $f\in L^p(\rn)$ and there exist $r\in[1,\fz)$, a non-negative $g\in L^p(\rn)$ and positive constants
$C_{27}$, $C_{28}$ such that, for all $t\in(0,1)$ and almost every $x\in\rn$,
\begin{equation*}
\lf[\dashint_{B(x,t)}\lf|f(y)-B_{C_{27}t} f(y)\r|^r\,dy\r]^{\frac 1r}\le C_{28} t^\al  g(x);
\end{equation*}

{\rm(iv)} $f\in L^p(\rn)$ and  there exist a non-negative $g\in L^p(\rn)$ and positive constants
$C_{29}$, $C_{30}$ such that, for all $t\in(0,1)$ and almost every $x\in\rn$,
\begin{equation*}
\dashint_{B(x,t)} |f(y)-B_{C_{29}t} f(y)|\,dy\le C_{30} t^\al  g(x);
\end{equation*}

{\rm(v)} $f\in L^p(\rn)$ and there exist
a non-negative $g\in L^p(\rn)$ and positive constants
$C_{31}$, $C_{32}$, $C_{33}$ such that, for all $t\in(0,1)$ and almost every $x\in\rn$ and
$y\in B(x,\,C_{31}t)$,
\begin{equation*}
\sup_{y\in B(x,t)}\lf|f(y)-B_{C_{32}t} f(y)\r|\le C_{33} t^\al g(y);
\end{equation*}

{\rm(vi)} $f\in L^p(\rn)$ and there exist $r\in[1,\fz)$,
a non-negative $g\in L^p(\rn)$ and positive constants
$C_{34}$, $C_{35}$, $C_{36}$ such that, for all $t\in(0,1)$ and almost every $x\in\rn$ and
$y\in B(x,\,C_{34}t)$,
\begin{equation*}
\lf[\dashint_{B(x,t)}\lf|f(y)-B_{C_{35}t} f(y)\r|^r\,dy\r]^{\frac 1r}\le C_{36} t^\al g(y);
\end{equation*}

{\rm(vii)} $f\in L^p(\rn)$ and there exist
a non-negative $g\in L^p(\rn)$ and positive constants
$C_{37}$, $C_{38}$, $C_{39}$ such that, for all $t\in(0,1)$ and almost every $x\in\rn$ and
$y\in B(x,\,C_{37}t)$,
\begin{equation*}
\dashint_{B(x,t)} |f(y)-B_{C_{38}t} f(y)|\,dy\le C_{39} t^\al g(y).
\end{equation*}

In any one of the above cases, the function $g$ can be chosen so that $\|f\|_{L^p(\rn)}+ \|g\|_{L^p(\rn)}$ is equivalent to $\|f\|_{F^\az_{p,\,\fz}(\rn)}$ with equivalent positive constants independent of $f$.\\
{\rm b)} If $\al\in(0,2)$, then any one of the above statements (ii) through (vii) in a) implies (i). Moreover,
the statements (i), (iii) and (iv) in a) are equivalent for $\al\in(0,2)$.
\end{theorem}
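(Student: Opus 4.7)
The plan is to organize the statements into a hierarchy and exploit the characterizations furnished by Theorems \ref{apcF}, \ref{apcF0}(ii), and \ref{apcF2}. The key observation is that the three $g(y)$-type inequalities (v), (vi), (vii) dominate the corresponding $g(x)$-type inequalities (ii), (iii), (iv) after replacing $g$ by its Hardy--Littlewood maximal function $Mg$ (which lies in $L^p(\rn)$ since $p>1$); conversely, the $g(x)$-type inequalities feed directly into Theorem \ref{apcF0}(ii) or Theorem \ref{apcF2} with $q=\fz$.

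First I establish that each of (ii) through (vii) implies (i) under the weaker assumption $\a\in(0,2)$, which covers that direction of both a) and b). Since the LHS of (v) is independent of the dummy variable $y\in B(x,C_{31}t)$, averaging in $y$ over $B(x,C_{31}t)$ bounds the LHS by $C_{33}t^\a\dashint_{B(x,C_{31}t)}g(y)\,dy\le C_{33}t^\a Mg(x)$; hence (v) implies (ii) with $g$ replaced by $Mg\in L^p(\rn)$, and analogously (vi)$\Rightarrow$(iii) and (vii)$\Rightarrow$(iv). For (ii), dividing by $t^\a$, taking the supremum in $t\in(0,1)$, and then $L^p(\rn)$-norms gives
\begin{equation*}
\lf\|\sup_{t\in(0,1)}t^{-\a}\esssup_{y\in B(\cdot,t)}\lf|f(y)-B_{C_{25}t}f(y)\r|\r\|_{L^p(\rn)}\le C_{26}\|g\|_{L^p(\rn)},
\end{equation*}
whence Theorem \ref{apcF0}(ii) with $q=\fz$, combined with Remark \ref{R1.5x} to absorb the scale $C_{25}$, yields $f\in F^\az_{p,\,\fz}(\rn)$. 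The same reasoning with Theorem \ref{apcF2} (which for $q=\fz$ permits any $r\in[1,\fz)$) handles (iii) and (iv).

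Second, for part a) with $\a\in(n/p,1)$ I deduce (v) from (i) via Theorem \ref{apcF}(iii): there exist $g\in L^p(\rn)$ and $C_1,C_2>0$ such that $|f(z)-B_sf(z)|\le C_2s^\a g(y)$ for $s\in(0,1)$, a.e.\ $z$, and $y\in B(z,C_1s)$. Setting $s:=C_{32}t$ with $C_{32}$ (and correspondingly $C_{31}$) chosen so that $C_1C_{32}>1+C_{31}$, and noting that $z\in B(x,t)$ and $y\in B(x,C_{31}t)$ force $|y-z|<(1+C_{31})t<C_1C_{32}t$, taking the supremum over $z\in B(x,t)$ yields (v); then (vi) and (vii) follow from (v) via the trivial bounds $\sup\ge L^r\text{-average}\ge L^1\text{-average}$, and the passage from (v), (vi), (vii) to (ii), (iii), (iv) uses the averaging step of the preceding paragraph. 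For the remaining part of b), namely (i)$\Rightarrow$(iii), (iv) when $\a\in(0,2)$, Theorem \ref{apcF}(iii) is no longer available, but Theorem \ref{apcF}(ii) still furnishes $|f(y)-B_sf(y)|\le Cs^\a g(y)$ pointwise, and averaging in $y\in B(x,t)$ with $s=C_{29}t$ gives
\begin{equation*}
\dashint_{B(x,t)}\lf|f(y)-B_{C_{29}t}f(y)\r|\,dy\le C(C_{29}t)^\a\dashint_{B(x,t)}g(y)\,dy\le C't^\a Mg(x),
\end{equation*}
which establishes (iv); choosing $r=1$ (which is permitted in (iii)) simultaneously establishes (iii).

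The main obstacle I anticipate is the consistent bookkeeping of the three scale parameters appearing in each statement---namely $t$ itself, the radius $Ct$ inside $B_{Ct}f$, and the radius of the outer ball in the $\sup$ or $\dashint$---so that one can simultaneously invoke Theorem \ref{apcF}(iii), Remark \ref{R1.5x}, and the constraint $s\in(0,1)$ built into these pointwise inequalities. In particular, for $t$ close to $1$ where $C_{32}t$ may exceed $1$, the pointwise step must be supplemented by an estimate on $\esssup_{z\in B(x,t)}|f(z)-B_{C_{32}t}f(z)|$ via an auxiliary maximal-type function of $f$, which can be absorbed into a slightly enlarged $L^p(\rn)$-function while preserving the norm equivalence $\|f\|_{L^p(\rn)}+\|g\|_{L^p(\rn)}\sim\|f\|_{F^\az_{p,\,\fz}(\rn)}$. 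Once the scales are reconciled, every implication reduces to a direct application of Theorems \ref{apcF}, \ref{apcF0}(ii), \ref{apcF2}, and the $L^p$-boundedness of $M$.
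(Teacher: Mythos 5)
Your overall architecture is sound and close in spirit to the paper's: both arguments reduce everything to Theorems \ref{apcF}, \ref{apcF0}(ii) and \ref{apcF2} together with the $L^p(\rn)$-boundedness of $M$, and your observation that (v), (vi), (vii) pass to (ii), (iii), (iv) by averaging over the comparison point $y$ and replacing $g$ by $Mg$ is exactly the right mechanism. The paper organizes the implication graph differently: it chains (ii)$\Rightarrow$(iii)$\Rightarrow$(iv) and (v)$\Rightarrow$(vi)$\Rightarrow$(vii) by the H\"older inequality, closes the loops with (iv)$\Rightarrow$(i) (Theorem \ref{apcF2}) and (vii)$\Rightarrow$(i) (by averaging in $y$ to land in Theorem \ref{t1-xx}(iii)), gets (i)$\Rightarrow$(iii) for all $\al\in(0,2)$ directly from Theorem \ref{apcF2} by taking $g(x):=\sup_{t\in(0,1)}t^{-\al}[\dashint_{B(x,t)}|f-B_tf|^r\,dy]^{1/r}$, and obtains (ii) and (v) from (i) through Theorem \ref{t1-xx}(ii). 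These are largely cosmetic differences, and your route through Theorem \ref{apcF}(ii) for (i)$\Rightarrow$(iv) in part b) is, if anything, slightly more elementary.

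The one step you should repair is (i)$\Rightarrow$(v). You impose $C_1C_{32}>1+C_{31}$ so that $y\in B(x,C_{31}t)$ lies in $B(z,C_1C_{32}t)$ for every $z\in B(x,t)$; but the gradient-type function produced by Theorem \ref{apcF} comes with $C_1=1$, which forces $C_{32}>1$ and hence $s=C_{32}t>1$ for $t$ near $1$, where the hypothesis of Theorem \ref{apcF}(iii) is unavailable. Your proposed patch — bounding $\esssup_{z\in B(x,t)}|f(z)-B_{C_{32}t}f(z)|$ by ``an auxiliary maximal-type function of $f$'' — is not automatic: for $t$ of order $1$ the term $\esssup_{z\in B(x,t)}|f(z)|$ over a unit-scale ball is not dominated by an $L^p(\rn)$ function evaluated at $y$ unless you reuse the pointwise estimate at a smaller scale (e.g. write $|f(z)|\le|f(z)-B_{1/2}f(z)|+|B_{1/2}f(z)|$ and control both pieces by $Mg(y)+Mf(y)$), so as written this is a gap. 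The cleaner fix, which is what the paper does via Theorem \ref{t1-xx}(ii), is to keep $C_{32}=1$ and instead average the Haj{\l}asz-type bound over its comparison point: from $|f(z)-B_tf(z)|\le C_2t^\al g(w)$ for a.e. $w\in B(z,C_1t)$ one gets $|f(z)-B_tf(z)|\le C_2t^\al\dashint_{B(z,C_1t)}g\ls t^\al Mg(y)$ for every $y\in B(x,C_{31}t)$, since $B(z,C_1t)\subset B(y,(1+C_{31}+C_1)t)$; taking the supremum over $z\in B(x,t)$ yields (v) with $g$ replaced by a constant multiple of $Mg$, with no constraint on the scales and no loss in the norm equivalence. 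With that substitution your proof is complete.
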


\begin{remark}\label{re}
(i) Recall that, by \cite[Corollary 1.3]{y03}, \cite[Corollary 1.2]{kyz10} and
\cite[Proposition 2.1]{kyz11} (see also \cite[Remark 3.3(ii)]{kyz11}),
for $\az\in(0,1)$ and $p\in(\frac{n}{n+\az},\fz)$, the Triebel-Lizorkin space
$F^\az_{p,\fz}(\rn)$ coincides with the \emph{fractional
Haj{\l}asz-Sobolev space $M^{\az,p}(\rn)$},
which is defined in \cite{y03} as the collection of all functions $f\in L^p(\rn)$
such that there exist a nonnegative function $g\in L^p(\rn)$ and
$E\subset \rn$ with measure zero so that
\begin{equation}\label{h-g}
|f(x)-f(y)|\le |x-y|^\az [g(x)+g(y)],\quad x,\,y\in \rn\setminus E.
\end{equation}
Such function $g$ is called the \emph{$\az$-fractional Haj{\l}asz gradient} of $f$.
The quasi-norm of $f$ in $M^{\az,p}(\rn)$ is then given by
$\|f\|_{L^p(\rn)}+\inf\{\|g\|_{L^p(\rn)}\}$, where the infimum is taken over all
such $\az$-fractional Haj{\l}asz gradients of $f$.

By the above equivalence, we see that Theorems \ref{apcF} through
\ref{t1-xxx} provide some new pointwise characterizations of fractional
Haj{\l}asz-Sobolev spaces $M^{\az,p}(\rn)$ via the differences between $f$ and its ball
average $B_tf$, which is different from the well-known
pointwise characterization of $M^{\az,p}(\rn)$ via Haj{\l}asz gradients as in \eqref{h-g}.

(ii) It was proved in \cite{dgyy} that a locally integrable function $f$ belongs
to Sobolev space ${W}^{2,p}(\rn)$, with $p\in(1,\fz)$, if and only if  either of (ii) and (iii) of
Theorems \ref{apcF} and \ref{t1-x}, or one of (ii) through (vii) of Theorems \ref{t1-xx}
and \ref{t1-xxx} holds true with $\al=2$. Notice that
${W}^{2,p}(\rn)=F^2_{p,\,2}(\rn)$ for all $p\in(1,\fz)$.
Comparing Theorems \ref{apcF} through
\ref{t1-xxx} with \cite[Theorems 1.1 through 1.4]{dgyy}, we find a
\emph{jump} of the parameter $q$ of Triebel-Lizorkin spaces $F^\az_{p, q}(\rn)$ when $\al=2$
and $\az\in (0,2)$ for the
above pointwise characterizations. More precise, letting $p\in(1,\fz)$,
any one of the items of Theorem \ref{apcF}(ii), and  (iii) and (iv) of
Theorem \ref{t1-xxx} when $\az\in(0,2)$ characterize $F^\az_{p,\,\fz}(\rn)$, while, when $\az=2$,
they characterize $F^2_{p,\,2}(\rn)$.
This interesting phenomena also appears in the pointwise characterizations
of Triebel-Lizorkin spaces $F^\az_{p, q}(\rn)$ via Haj{\l}asz gradients with
$p\in (1,\fz)$ and $q\in \{2,\fz\}$, but $\az\in (0,1]$ (see \cite{y03}).

(iii) We point out that the discrete versions of Theorems \ref{apcF2} through
\ref{t1-xxx}, namely, the conclusions via replacing $t$ by $2^{-k}$ and
$\int_0^1\cdots \frac{dt}t$ by $\sum_{k\in\zz_+}$ in those statements of
Theorems \ref{apcF2} through \ref{t1-xxx}, are also true.

(iv) In view of (i) of this Remark, the pointwise characterizations in
Theorems \ref{apcF} through \ref{t1-xxx} provide some possible ways to
introduce (fractional) Sobolev spaces with smoothness in $(0,2)$ on metric measure spaces.
Indeed, we can prove that some statements of Theorems \ref{apcF} through \ref{t1-xxx}
are still equivalent on spaces of homogeneous type in Subsection \ref{s3.2} below.
\end{remark}

The proofs of Theorems \ref{t-bf}, \ref{apcF2}, \ref{apcF0}, \ref{apcF1}
and \ref{apcF} through \ref{t1-xxx}
are presented in Section \ref{s2}.
The proof of Theorem \ref{t-bf} is similar to that of
\cite[Theorem 1.3(ii)]{dgyy0}.
We write $f-B_tf$ as a convolution operator, then control $f-B_tf$ by
some maximal functions via calculating pointwise estimates of the related
operator kernel and finally apply the Fefferman-Stein vector-valued maximal
inequality (see, for example, \cite{fs71}). The Calder\'on
reproducing formula on $\rn$ also plays a key role in this proof.
By means of Theorem \ref{t-bf}, together with some known characterizations
of $ F^\az_{p,q}(\rn)$ via Lusin-area functions involving differences,
we then prove Theorems \ref{apcF2} through \ref{apcF1}.
Using these characterizations in the limiting case $q=\fz$,
in Theorems \ref{t-bf} through \ref{apcF0}, of $F^\az_{p,q}(\rn)$,
we obtain the pointwise characterizations of $F^\az_{p,\fz}(\rn)$ in
Theorems \ref{apcF} through \ref{t1-xxx}. This method is totally different
from the method used in the proofs of \cite[Theorems 1.1 through 1.4]{dgyy},
which strongly depends on the
behaviors of the Laplace operator on $\rn$ and is available only
for Besov and Triebel-Lizorkin spaces with even smoothness orders
and hence is not suitable for Theorems \ref{apcF} through \ref{t1-xxx}
in this article, since Theorems \ref{apcF} through \ref{t1-xxx}
concern Triebel-Lizorkin spaces with fractional smoothness orders.

Finally, Section \ref{s3} is devoted to some corresponding results of
Theorems \ref{t-bf}, \ref{apcF2}, \ref{apcF0}, \ref{apcF1} and \ref{apcF} through \ref{t1-xxx} for Triebel-Lizorkin spaces  with smoothness order bigger than $2$. We also show some items in Theorems \ref{apcF} through
\ref{t1-xxx} are still equivalent on spaces of homogeneous type in the sense of Coifman and Weiss.

To end this section, we make some conventions on notation.
We use the \emph{symbol} $A \ls B$ to denote that
there exists a positive constant $C$ such that $A \le C \,B$.
The symbol $A \sim B$ is used as an abbreviation of
$A \ls B \ls A$. Here and hereafter, the \emph{symbol}  $C $ denotes a positive constant
which is independent of the main parameters, but may depend on the fixed
parameters $n,\,\az,\,p,\,q,\,\lambda$ and also probably auxiliary functions,
unless otherwise stated; its value  may vary from line to line. For any $p\in[1,\fz)$,
let $p'$ denotes its \emph{conjugate index}, namely, $1/p+1/p'=1$.

\section{Proofs of Theorems \ref{t-bf}, \ref{apcF2}, \ref{apcF0}, \ref{apcF1} and \ref{apcF} through \ref{t1-xxx}}\label{s2}
\hskip\parindent
First, we give the proof of Theorem \ref{t-bf}. To this end, we need some technical lemmas.
For all $t\in(0,\fz)$ and $x\in\rn$, let
$I(x):=\f 1 {|B(0,1)|} \chi_{B(0,1)} (x)$ and  $I_t(x):=t^{-n} I(x/t)$.
Then
 $$B_t f(x) =( f * I_{t})(x),\quad x\in\rn,\ \ t\in(0,\fz),$$
and hence
\begin{equation*}
(B_t f)^{\we} (\xi) = \wh{I} (t\xi) \wh{f} (\xi),\quad    \  \xi\in\rn.
\end{equation*}
It is easy to check that
\begin{equation*}
 \wh{I}(x) =\ga_n\int_0^1 \cos (u|x|) (1-u^2) ^{\f {n-1}2} \, du, \    \  x\in\rn,
 \end{equation*}
 with
 $
 \ga_n := [ \int_0^1(1-u^2) ^{\f {n-1}2} \, du]^{-1}
 $
(see also \cite[p.\,430, Section 6.19]{s93}).

For all $\lz,\,q\in(1,\fz),\,\bz\in(0,\fz)$, non-negative measurable
functions $F:\rn\times (0,\fz)\to\mathbb{C}$ and $x\in\rn$, define
\begin{eqnarray*}
\mathcal{G}(F)(x):= \lf\{\int_0^1 |F(x,t)|^q\,\frac{dt}t\r\}^{\frac1q},
\end{eqnarray*}
\begin{eqnarray*}\mathcal{S}_\bz(F)(x):=\lf\{\int_0^1 \dashint_{B(x,\bz t)} |F(y,t)|^q \,dy\, \frac{dt}t\r\}^{\frac1q}
\end{eqnarray*}
and
\begin{eqnarray*}
\mathcal{G}_\lambda^*(F)(x):=\lf\{\int_0^1 \int_\rn |F(y,t)|^q
  \lf(\frac t {t+|x-y|}\r)^{\lz n} \,dy\,
  \frac{dt}{t^{n+1}}\r\}^{\frac1q}.
\end{eqnarray*}
We write $\mathcal{S}(F):=\mathcal{S}_1(F)$.

We have the following technical lemma.
\begin{lemma}\label{lem-2-3}
Let $\lz,\,p,\,q,\,\bz\in(1,\fz)$.
Then there exists a positive constant $C$ such that, for all measurable functions
$F$ on $\rn\times (0,\fz)$,
\begin{enumerate}
  \item[\emph{(i)}] for all $x\in\rn$, $\mathcal{S}(F)(x)\le C\mathcal{G}_\lz^\ast(F)(x)$;
  \item[\emph{(ii)}]  $$\lf\|\mathcal{S}_\bz(F)\r\|_{L^p(\rn)}\le C\bz^{n(\frac1{\min\{p,q\}}-\frac1q)}\|S(F)\|_{L^p(\rn)},$$ where
  $C$ is independent of $\bz$ and $F$;
  \item[\emph{(iii)}] for $p\in[q,\fz)$, $\|\mathcal{G}_\lz^\ast(F)\|_{L^p(\rn)}\le C\|\mathcal{G}(F)\|_{L^p(\rn)}$.
\end{enumerate}
\end{lemma}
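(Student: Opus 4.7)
Part (i) is an immediate pointwise estimate: for $y\in B(x,t)$ we have $|x-y|<t$, hence $(t/(t+|x-y|))^{\lambda n}\ge 2^{-\lambda n}$. Restricting the spatial integration in $\mathcal{G}_\lambda^*(F)(x)^q$ to $B(x,t)$ and inserting this lower bound on the kernel recovers a constant multiple of $\int_0^1\dashint_{B(x,t)}|F(y,t)|^q\,dy\,\frac{dt}{t}=\mathcal{S}(F)(x)^q$, which is (i).

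For part (iii), I plan to use $L^{p/q}$-duality, which is valid since $p\ge q$. By Fubini, for any nonnegative $g$ with $\|g\|_{L^{(p/q)'}(\rn)}\le 1$,
\begin{equation*}
\int_\rn \mathcal{G}_\lambda^*(F)(x)^q g(x)\,dx = \int_0^1\int_\rn |F(y,t)|^q\lf[\int_\rn\lf(\frac{t}{t+|x-y|}\r)^{\lambda n}\frac{g(x)}{t^n}\,dx\r]\frac{dy\,dt}{t}.
\end{equation*}
Since $\lambda>1$, the kernel $x\mapsto (t/(t+|x-y|))^{\lambda n}/t^n$ is positive, radially decreasing around $y$, and has $L^1(\rn)$-norm bounded independently of $t$; hence the bracketed quantity is pointwise dominated by $CM(g)(y)$. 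Combining this with the $L^{(p/q)'}$-boundedness of $M$ and H\"{o}lder then yields $\|\mathcal{G}_\lambda^*(F)\|_{L^p(\rn)}\lesssim\|\mathcal{G}(F)\|_{L^p(\rn)}$.

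The heart of the lemma is (ii). When $p=q$, Fubini collapses the spatial integral via $\int_\rn\chi_{B(x,\beta t)}(y)/|B(x,\beta t)|\,dx=1$ and yields the exact equality $\|\mathcal{S}_\beta(F)\|_{L^q(\rn)}=\|\mathcal{S}(F)\|_{L^q(\rn)}$, matching the $\beta^0$ factor. For $p>q$ (the exponent is still $0$), the naive duality mimicking (iii) only produces $\|\mathcal{S}_\beta(F)\|_{L^p}\lesssim\|\mathcal{G}(F)\|_{L^p}$, which is strictly weaker; here I would appeal to the classical Coifman--Meyer--Stein change-of-aperture equivalence for tent spaces (proved via a good-$\lambda$ argument). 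For $p<q$, I would cover $B(x,\beta t)$ by $\sim\beta^n$ balls $B(z_i,t)$ with $z_i\in B(x,2\beta t)$, obtain $\dashint_{B(x,\beta t)}|F(y,t)|^q\,dy\lesssim\beta^{-n}\sum_i\dashint_{B(z_i,t)}|F(y,t)|^q\,dy$, and combine H\"{o}lder's inequality in the $z_i$-sum with the $L^{p/r}$-boundedness of $M$ for a suitable $r\in(0,p)$ to transfer the pointwise bound to $L^p(\rn)$ and produce the claimed $\beta^{n(1/p-1/q)}$ factor.

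The hardest step will be securing the \emph{sharp} $\beta$-exponent in (ii) when $p<q$: the covering argument naturally produces $\sim\beta^n$ terms, and a crude pointwise or $L^\infty$ bound would cost a factor $\beta^{n/q}$ rather than $\beta^{n(1/p-1/q)}$. The correct power emerges only after a careful balancing of Minkowski's inequality for the $t$-integration, H\"{o}lder in the $z_i$-sum, and the Fefferman--Stein maximal inequality at the right exponent $p/r$; fine-tuning the choice of $r$ is the key delicate point.
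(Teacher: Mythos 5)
Parts (i) and (iii) of your proposal are correct and coincide with the paper's own argument: (i) is the evident pointwise bound, and for (iii) the paper runs exactly your duality computation, using $\sup_{t\in(0,\infty)}\int_{\rn}t^{(\lambda-1)n}(t+|x-y|)^{-\lambda n}h(x)\,dx\lesssim Mh(y)$ together with the boundedness of $M$ on $L^{(p/q)'}(\rn)$, which is where $p\ge q$ enters. For (ii) in the range $p\in[q,\infty)$ both you and the paper defer to the literature (the paper to Torchinsky, you to the Coifman--Meyer--Stein change of aperture), so that part is on equal footing.

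The genuine gap is in (ii) for $p\in(1,q)$, the only case in which the exponent $n(1/p-1/q)$ is nontrivial, and the one you yourself flag as unresolved. The covering inequality $\dashint_{B(x,\beta t)}|F(y,t)|^q\,dy\le\beta^{-n}\sum_i\dashint_{B(z_i,t)}|F(y,t)|^q\,dy$ is fine, but no subsequent pointwise manipulation (H\"older over the $\sim\beta^n$ indices, maximal functions at exponent $p/r$) can deliver the claimed bound: if you dominate each $\dashint_{B(z_i,t)}|F(\cdot,t)|^q$ by an average of $w\mapsto\dashint_{B(w,2t)}|F(\cdot,t)|^q$ over $B(z_i,t)$ and sum using bounded overlap, you merely recover an aperture-$\beta$ average of the same quantity and the argument is circular; if instead you bound the sum crudely by $\beta^n$ times a maximal function, the $\beta$-gain is lost. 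The sharp exponent is an $L^p$-level, distributional phenomenon rather than a pointwise one. The paper obtains it by a Calder\'on--Torchinsky type argument: set $E_\mu:=\{x\in\rn:\ \mathcal{S}(F)(x)>\mu\beta^{n/q}\}$ and $U_\mu:=\{x\in\rn:\ M(\chi_{E_\mu})(x)>(4\beta)^{-n}\}$, so that $|U_\mu|\lesssim\beta^n|E_\mu|$ by the weak $(1,1)$ inequality for $M$; then a Fubini argument based on $|U_\mu^{\complement}\cap B(y,\beta t)|\lesssim\beta^n|E_\mu^{\complement}\cap B(y,t)|$ yields $\int_{U_\mu^{\complement}}[\mathcal{S}_\beta(F)]^q\lesssim\int_{E_\mu^{\complement}}[\mathcal{S}(F)]^q$, and summing the resulting distribution-function estimate over dyadic levels $2^\ell$ gives $\|\mathcal{S}_\beta(F)\|_{L^p(\rn)}^p\lesssim\beta^{(1-p/q)n}\|\mathcal{S}(F)\|_{L^p(\rn)}^p$. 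You need an argument of this kind (or an explicit change-of-aperture theorem for tent spaces with $p<q$ tracking the $\beta$-dependence) to close the proof; the covering-plus-H\"older route as sketched does not.
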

 \begin{proof}The proof of Lemma \ref{lem-2-3}(i) is obvious. Similar to the proofs of \cite[Theorem 4.4 and (4.3)]{Torchinsky}, we can prove that Lemma \ref{lem-2-3}(ii) holds true for $p\in[q,\fz)$.
Now, we give the proof of Lemma \ref{lem-2-3}(ii) for $p\in(1,q)$. To this end, For all $\mu\in(0,\fz)$ and measurable functions
$F$ on $\rn\times (0,\fz)$,
let $E_{\mu}:=\{x\in\rn:\ S(F)(x)>\mu\beta^{n/q}\}$  and
$$U_\mu:=\{x\in\rn:\ M(\chi_{E_{\mu}})(x)>(4\beta)^{-n}\},$$
where $M$ denotes the Hardy-Littlewood maximal function. Then, by the
week type $(1,1)$ boundedness of $M$, we see that,
for all $\mu\in(0,\fz)$,
\begin{eqnarray}\label{az1}
|U_\mu|
\ls (4\beta)^{n}\|\chi_{E_{\mu}}\|_{L^1(\rn)}
\sim \beta^{n}|E_{\mu}|.
\end{eqnarray}
Let
$$\rho(y):=\inf\lf\{|y-z|:\ z\in U_\mu^\com\r\},$$
where $U_\mu^\com:=\rn\setminus U_\mu$.
Then, by the Fubini theorem, it holds true that
\begin{eqnarray}\label{bz3}
&&\int_{ U_\mu^\com}\lf[\mathcal{S}_\beta(F)(x)\r]^q\,dx\\
&&\hs=\int_{U_\mu^\com}\int_0^1 \int_{\{y\in\rn:\ |y-x|<\bz t\}}
|F(y,t)|^q(\bz t)^{-n}\,\frac{dy\,dt}t\,dx\noz\\
&&\hs=\int_0^1
\int_{\{y\in\rn:\ \rho(y)<\bz t\}}
|F(y,t)|^q\lf|U_\mu^\com\cap B(y,\beta t)\r|(\bz t)^{-n}\,\frac{dy\,dt}t.\noz
\end{eqnarray}
If $ U_\mu^\com\cap B(y,\bz t)\neq\emptyset$,
then there exists $x_0\in U_\mu^\com\cap B(y,\bz t)$ and,
by the definition of $ U_\mu$ and $\bz\in(1,\fz)$, we see that
$$\frac{| E_{\mu}\cap B(y,t)|}{|B(y,t)|}
\le\frac{\bz^n}{|B(y,\bz t)|}\int_{B(y,\bz t)}\chi_{E_{\mu}}(x)\,dx
\le \bz^nM(\chi_{E_{\mu}})(x_0)\le 4^{-n},$$
which further implies that
\begin{eqnarray}\label{bz4}
\lf|U_\mu^\com\cap B(y,\bz t)\r|
&&\ls\bz^{n}\frac{|B(y,t)|}{|E_{\mu}^\com\cap B(y,t)|}
\lf| E_{\mu}^\com\cap B(y,t)\r|
\ls\bz^{n}\lf| E_{\mu}^\com\cap B(y,t)\r|.
\end{eqnarray}
If $ U_\mu^\com\cap B(y,\bz t)=\emptyset$, then \eqref{bz4} still holds true.
Thus, from \eqref{bz3} and \eqref{bz4},
it follows that
\begin{eqnarray*}
&&\int_{ U_\mu^\com}\lf[\mathcal{S}_\bz(F)(x)\r]^q\,dx\\
&&\hs\ls\int_0^1\int_\rn
|F(y,t)|^q\lf|E_{\mu}^\com\cap B(y,t)\r|(\bz t)^{-n}\bz^{n}\,\frac{dy\,dt}t\noz\\
&&\hs\ls\int_{ E_{\mu}^\com}\int_0^1 \int_{\{y\in\rn:\ |y-x|< t\}}
|F(y,t)|^q\,\frac{dy\,dt}{t^{n+1}}\,dx
\sim\int_{E_{\mu}^\com}\lf[\mathcal{S}(F)(x)\r]^q\,dx.\noz
\end{eqnarray*}
By this and \eqref{az1}, for all $\ell\in\mathbb{Z}$, we have
\begin{eqnarray}\label{five55}
&&\lf|\lf\{x\in\rn:\ \mathcal{S}_\bz(F)(x)>2^\ell\r\}\r|\\
&&\quad\leq\lf|U_{2^\ell}\r|+\lf|U_{2^\ell}^\com\cap\lf\{x\in\rn:\ \mathcal{S}_\bz(F)(x)>2^\ell\r\}\r|\noz\\
&&\quad\ls\beta^{n}\lf|E_{2^\ell}\r|+2^{-q\ell}\int_{E_{2^\ell}^\com}
[\mathcal{S}(F)(x)]^q\,dx\noz\\
&&\quad\sim\beta^{n}\lf|E_{2^\ell}\r|
+2^{-q\ell}\int_0^{2^\ell \beta^{\frac nq}}\nu^{q-1}\lf|\lf\{x\in\rn:\ S(F)(x)>\nu\r\}\r|\,d\nu\noz\\
&&\quad\ls\beta^{n}\lf|E_{2^\ell}\r|+2^{-q\ell}
\sum_{m=-\fz}^{m_\ell}\int_{2^{m-1}}^{2^m}\nu^{q-1}\lf|\lf\{x\in\rn:\ \mathcal{S}(F)(x)>\nu\r\}\r|\,d\nu\noz\\
&&\quad\ls\beta^{n}\lf|E_{2^\ell}\r|+2^{-q\ell}
\sum_{m=-\fz}^{m_\ell}2^{q(m-1)}
\lf|\lf\{x\in\rn:\ \mathcal{S}(F)(x)>2^{m-1}\r\}\r|,\noz
\end{eqnarray}
where $m_\ell:=\ell+\lfloor\frac nq\log_2\beta\rfloor+1$ and
$\lfloor s\rfloor$ denotes the biggest integer which does not exceed the real number $s$.

Therefore, when $p\in(1,q)$, by \eqref{five55}
and the definition of $E_{\mu}$, we know that
\begin{eqnarray*}
&&\lf\|\mathcal{S}_\bz(F)\r\|^p_{L^p(\rn)}\\&&\quad\sim\sum_{\ell\in\mathbb{Z}}2^{\ell p}
\lf|\lf\{x\in\rn:\ \mathcal{S}_\bz(F)(x)>2^{\ell}\r\}\r|\noz\\
&&\quad\ls\sum_{\ell\in\mathbb{Z}}2^{\ell p}
\beta^n\lf|\lf\{x\in\rn:\ \mathcal{S}(F)(x)>2^{\ell}\beta^{\frac nq}\r\}\r|\noz\\
&&\quad\quad+\sum_{\ell\in\mathbb{Z}}2^{\ell (p-q)}
\sum_{m=-\fz}^{m_\ell}2^{q(m-1)}\lf|\lf\{x\in\rn:\ \mathcal{S}(F)(x)>2^{m-1}\r\}\r|\noz\\
&&\quad\ls\beta^{(1-\frac pq)n}\lf\|\mathcal{S}(F)\r\|^p_{L^p(\rn)}\noz\\&&\quad\quad+
\sum_{\ell\in\mathbb{Z}}2^{(1-\gamma)p\ell}\beta^{\frac nq(q-p\gamma)}
\sum_{m=-\fz}^{m_\ell}2^{p\gamma(m-1)}\lf|\lf\{x\in\rn:\ \mathcal{S}(F)(x)>2^{m-1}\r\}\r|\noz\\
&&\quad\sim\beta^{(1-\frac pq)n}\lf\|\mathcal{S}(F)\r\|^p_{L^p(\rn)}\noz\\&&\quad\quad+
\sum_{m\in\mathbb{Z}}\beta^{\frac nq(q-p\gamma)}2^{p\gamma(m-1)}
\sum_{\ell=\ell_m}^{\fz}2^{(1-\gamma)p\ell}\lf|\lf\{x\in\rn:\ \mathcal{S}(F)(x)>2^{m-1}\r\}\r|\noz\\
&&\quad\ls\beta^{(1-\frac pq)n}\lf\|\mathcal{S}(F)\r\|^p_{L^p(\rn)}+\beta^{(1-\frac pq)n}
\sum_{m\in\mathbb{Z}}2^{p(m-1)}\lf|\lf\{x\in\rn:\ \mathcal{S}(F)(x)>2^{m-1}\r\}\r|\noz\\
&&\quad\sim\beta^{(1-\frac pq)n}\lf\|\mathcal{S}(F)\r\|^p_{L^p(\rn)},\noz
\end{eqnarray*}
where $\gamma\in(1,q/p)$ and $\ell_m:=m-\lfloor\frac nq\log_2\beta\rfloor-1$, which
finishes the proof of Lemma \ref{lem-2-3}(ii) for $p\in(1,q)$.

Now, we show Lemma \ref{lem-2-3}(iii).  By the Fubini theorem, we see that, for any non-negative measurable function
$h$,
 \begin{eqnarray*}
&&\int_{\rn}[\mathcal{G}_{\lz}^\ast(F)(x)]^qh(x)\,dx\\
&&\quad=\int_{\rn}\int_0^1 \int_{\mathbb  R^n}\lf(\frac{t}{t+|x-y|}\r)^{\lambda
 n}\,|F(y,t)|^q\frac{dy\,dt}{t^{n+1}}h(x)\,dx\\
&&\quad=\int_0^1\int_{\rn}|F(y,t)|^q\int_{\mathbb  R^n}\lf(\frac{t}{t+|x-y|}\r)^{\lambda
 n}\frac1{t^n}h(x)\,dx\,dy\,\frac{dt}{t}\\
&&\quad\le\int_{\rn}\int_0^1|F(y,t)|^q\lf[\sup_{t\in(0,\fz)}\int_{\mathbb  R^n}\frac{t^{(\lambda-1)n}}{\lf(t+|x-y|\r)^{\lambda
 n}}h(x)\,dx\r]\,\frac{dt}{t}\,dy\\
&&\quad\ls\int_{\rn}[\mathcal{G}(F)(y)]^q Mh(y)\,dy.
\end{eqnarray*}

Therefore, by $p\geq q$ and the boundedness of $M$ on $L^{(p/q)'}(\rn)$,
we find that
\begin{eqnarray*}
\lf\|\mathcal{G}_{\lz}^\ast(F)\r\|_{L^p(\rn)}^q
&&=\lf\|\lf[\mathcal{G}_{\lz}^\ast(F)\r]^q\r\|_{L^{p/q}(\rn)}\\
&&=\sup_{\|h\|_{L^{(p/q)'}(\rn)}\leq1}
\int_{\rn}[\mathcal{G}_{\lz}^\ast(F)(x)]^qh(x)\,dx\\
&&\ls\sup_{\|h\|_{L^{(p/q)'}(\rn)}\leq1}\int_{\rn}[\mathcal{G}(F)(x)]^q Mh(x)\,dx\\
&&\ls\lf\|\mathcal{G}(F)\r\|_{L^p(\rn)}^q
\sup_{\|h\|_{L^{(p/q)'}(\rn)}\leq1}\|Mh\|_{L^{(p/q)'}(\rn)}
\ls\lf\|\mathcal{G}(F)\r\|_{L^p(\rn)}^q,
\end{eqnarray*}
which implies Lemma \ref{lem-2-3}(iii) holds true and hence finishes the proof of Lemma \ref{lem-2-3}.
\end{proof}
The following two lemmas come from \cite[Lemmas 2.1 and 2.2]{dgyy0}, respectively.
\begin{lemma}\label{lem-2-1}
For all $x\in\rn$,
\begin{equation*}
\wh{I}(x) =1-A(|x|),\end{equation*}
where
\begin{equation*}
A(s) := 2\ga_n \int_0^1 (1-u^2)^{\f
{n-1}2} \lf(\sin \f {us}2\r)^2 \, du,\   \   \  s\in\mathbb{R}.\end{equation*}
Furthermore, $s^{-2} A(s)$ is a   smooth function on $\mathbb{R}$ satisfying that
there exist positive constants $c_1$ and $c_2$ such that
\begin{equation}\label{2-5}
0<c_1 \leq \f {A (s)}{s^2} \leq c_2,\    \    \ s\in(0,4]
\end{equation}
and
$$\sup_{s\in \mathbb{R}}
\lf| \lf(\f {d} {ds}\r)^i \lf(\f {A(s)}{s^2}\r)\r| <\infty,\
\   \  i\in\nn.$$
\end{lemma}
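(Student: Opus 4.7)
The first equation is a direct trigonometric calculation. The plan is to apply the identity $2\sin^2(\theta)=1-\cos(2\theta)$ inside the definition of $A(s)$, so that, for $s\in\mathbb{R}$,
\begin{equation*}
A(s)=\gamma_n\int_0^1 (1-u^2)^{(n-1)/2}(1-\cos(us))\,du
=1-\gamma_n\int_0^1(1-u^2)^{(n-1)/2}\cos(us)\,du,
\end{equation*}
where the last step uses the normalization $\gamma_n=[\int_0^1(1-u^2)^{(n-1)/2}du]^{-1}$. Comparing this with the formula for $\wh{I}(x)$ recalled right before the lemma and specializing to $s=|x|$ yields $\wh{I}(x)=1-A(|x|)$.

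For the smoothness and size statements concerning $A(s)/s^2$, I would introduce the auxiliary function $g(y):=(1-\cos y)/y^2$, extended by $g(0)=1/2$. A direct expansion of $\cos y$ at the origin shows $g$ is real analytic at $0$, and for $|y|\geq 1$ one differentiates the quotient term-by-term to see that $g$ and all of its derivatives are bounded on $\mathbb{R}$. Since
\begin{equation*}
\frac{A(s)}{s^2}=\gamma_n\int_0^1(1-u^2)^{(n-1)/2}\,u^2\, g(us)\,du,
\end{equation*}
differentiation under the integral sign gives, for each $i\in\nn$,
\begin{equation*}
\Bigl(\tfrac{d}{ds}\Bigr)^i\!\Bigl(\tfrac{A(s)}{s^2}\Bigr)
=\gamma_n\int_0^1(1-u^2)^{(n-1)/2}\,u^{i+2}\,g^{(i)}(us)\,du,
\end{equation*}
and the boundedness of $g^{(i)}$ combined with $u\in[0,1]$ yields the required uniform bound in $s\in\mathbb{R}$.

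Finally, the two-sided bound \eqref{2-5} on $(0,4]$ comes from continuity and positivity: by the representation above, $A(s)/s^2$ is continuous on $[0,4]$ with limit $\gamma_n\int_0^1(1-u^2)^{(n-1)/2}(u^2/2)\,du>0$ at $s=0$, and it is strictly positive on $(0,4]$ because the integrand $(1-u^2)^{(n-1)/2}\sin^2(us/2)$ is non-negative and not identically zero in $u$. Since a continuous strictly positive function on the compact interval $[0,4]$ (after continuous extension to $s=0$) attains positive minimum and finite maximum, one obtains constants $c_1,c_2>0$ with $c_1\leq A(s)/s^2\leq c_2$ for $s\in(0,4]$. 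The only mildly delicate step is checking that $g$ has bounded derivatives of all orders, but this is a routine estimate separating the neighborhoods $|y|\leq 1$ and $|y|\geq 1$.
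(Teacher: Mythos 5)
Your proof is correct. The paper itself does not prove this lemma but only quotes it from [Lemma 2.1, dgyy0]; your argument is the natural self-contained verification: the half-angle identity $2\sin^2(us/2)=1-\cos(us)$ together with the normalization of $\gamma_n$ gives $\wh I(x)=1-A(|x|)$, the representation $A(s)/s^2=\gamma_n\int_0^1(1-u^2)^{(n-1)/2}u^2 g(us)\,du$ with $g(y)=(1-\cos y)/y^2$ cleanly reduces smoothness and the uniform derivative bounds to the (routine) boundedness of $g^{(i)}$ on $\mathbb{R}$, and the two-sided bound on $(0,4]$ follows from continuity, the positive limit $\tfrac12\gamma_n\int_0^1(1-u^2)^{(n-1)/2}u^2\,du$ at $s=0$, and strict positivity of the integrand for $u\in(0,1)$. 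No gaps.
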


\begin{lemma}\label{lem-2-2}  Let  $\{T_t\}_{t\in(0,\fz)}$ be a family of multiplier
operators given  by setting, for all $f\in L^2(\rn)$,
 $$ ( T_t f)^{\we} (\xi) := m(t\xi) \wh{f}(\xi),\quad     \     \xi\in \rn,\   \   t\in(0,\fz)$$
 for some $ m\in L^\infty(\rn)$. If
 $$ \|\nabla^{n+1}  m \|_{L^1(\rn)}+\|m\|_{L^1(\rn)}  \leq C_1<\infty, $$
 then there exists a positive constant $C$ such that, for all $f\in L^2(\rn)$ and $x\in\rn$,
 $$ \sup_{t\in(0,\fz)} | T_t f(x)|\le  C C_1Mf(x).$$
\end{lemma}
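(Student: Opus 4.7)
The plan is to realize each $T_t$ as a convolution operator, transfer the hypothesis on $m$ into pointwise decay of the kernel, and then deduce the maximal bound by a standard dyadic argument. Since $m\in L^\fz(\rn)\cap L^1(\rn)$, its inverse Fourier transform $K:=m^\vee$ is a bounded continuous function, and the identity $(T_tf)^{\we}(\xi)=m(t\xi)\wh f(\xi)$ together with the scaling rule for the Fourier transform yields $T_tf=K_t\ast f$, where $K_t(x):=t^{-n}K(x/t)$. Thus everything reduces to a good pointwise bound on the single function $K$.

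The key step is to prove $|K(x)|\le CC_1(1+|x|)^{-n-1}$ for all $x\in\rn$. On the unit ball $\{|x|\le 1\}$ this is immediate from $|K(x)|\le \|m\|_{L^1(\rn)}\le C_1$. For $|x|\ge 1$ I would pick a coordinate $j$ with $|x_j|\ge |x|/\sqrt n$ and integrate by parts $n+1$ times in $\xi_j$ in the Fourier-inversion integral, which formally yields
\begin{equation*}
(2\pi i x_j)^{n+1} K(x)=\int_{\rn}\partial_{\xi_j}^{n+1}m(\xi)\,e^{2\pi i x\cdot\xi}\,d\xi.
\end{equation*}
This bounds $|x_j|^{n+1}|K(x)|$ by $(2\pi)^{-n-1}\|\nabla^{n+1}m\|_{L^1(\rn)}\le CC_1$, whence $|x|^{n+1}|K(x)|\ls C_1$, and the two regimes combine to give the desired decay.

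With the decay in hand, since $|K_t(y)|\ls C_1\,t\,(t+|y|)^{-n-1}$, I would write
\begin{equation*}
|T_tf(x)|=|K_t\ast f(x)|\ls C_1\int_{\rn}\frac{t}{(t+|y|)^{n+1}}|f(x-y)|\,dy,
\end{equation*}
split this integral over the central ball $\{|y|<t\}$ and the dyadic annuli $\{2^{k-1}t\le|y|<2^k t\}$ for $k\in\nn$, and observe that on the $k$-th annulus the kernel is bounded by $Ct^{-n}2^{-k(n+1)}$ while $\int_{|y|<2^k t}|f(x-y)|\,dy\ls (2^k t)^n Mf(x)$. Summing the resulting geometric series in $k$ gives $|T_tf(x)|\ls C_1 Mf(x)$, uniformly in $t\in(0,\fz)$, which is the claimed inequality.

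The only real obstacle is justifying the integration-by-parts step, since the hypothesis $\nabla^{n+1}m\in L^1(\rn)$ carries no a priori pointwise regularity of the intermediate derivatives. I would handle this by first mollifying $m$ with a smooth compactly supported bump, for which integration by parts is legitimate, and then passing to the limit using the $L^1$ convergence of both $m$ and $\nabla^{n+1}m$ under mollification; the pointwise bound on $K$ is stable under this limit. Every other step is an elementary dyadic computation.
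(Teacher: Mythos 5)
Your argument is correct and is the standard one; note that the paper itself does not prove this lemma but imports it from \cite[Lemma 2.2]{dgyy0}, and the proof there is of exactly this type (kernel decay $|K(x)|\ls C_1(1+|x|)^{-n-1}$ obtained from $\|m\|_{L^1}$ near the origin and from $\|\nabla^{n+1}m\|_{L^1}$ via integration by parts at infinity, followed by the dyadic-annuli domination by $Mf$). Your concern about justifying the integration by parts is legitimate but easily dispatched: since $\nabla^{n+1}m\in L^1(\rn)$ in the distributional sense, the identity $(2\pi i x_j)^{n+1}m^\vee=(\partial_{\xi_j}^{n+1}m)^\vee$ holds in $\cs'(\rn)$, and both sides are continuous functions (inverse Fourier transforms of $L^1$ functions, up to the polynomial factor), so they agree pointwise; your mollification route works equally well.
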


The proof of Theorem \ref{t-bf} is similar to that of \cite[Theorem 3.1]{dgyy0},
which is a `discrete' version of Theorem \ref{t-bf}. Observing that only a sketch
of the proof of \cite[Theorem 3.1]{dgyy0} was given, for the
sake of completeness, we give the proof of Theorem \ref{t-bf} here.

\begin{proof}[Proof of Theorem \ref{t-bf}]
Let $\varphi$ and $\Phi$ satisfy \eqref{con} and \eqref{con1}, respectively.
Then there exist Schwartz functions $\psi$ and $\Psi$ satisfying
\eqref{con} and \eqref{con1}, respectively, such that
$$\wh\Phi(\xi)\wh\Psi(\xi)+\int_0^1 \wh\vz(t\xi)\wh\psi(t\xi)\,\frac{dt}t=1,\quad \xi\in\rn;$$
see, for example, \cite{a77,a75}.

Let $f\in F^\az_{p,\,q}(\rn)$.
Then it is well known that $f\in L^p(\rn)$ (see \cite[Theorem/2.5.11]{t83}).
Moreover, the equality
\begin{equation}\label{crf}
f=\Phi\ast \Psi \ast f+\int_0^{1}\vz_t\ast\psi_t\ast f\,\frac{dt}t
\end{equation}
holds true both in $L^p(\rn)$ and $\cs'(\rn)$, due to the
Calder\'on reproducing formula (see, for example, \cite{a77}).
Now we show $|||f|||_{\F(\rn)}\ls \|f\|_{\F(\rn)}$, and it suffices to prove that
 \begin{equation}\label{3-1}
\left\|\lf[\int_0^1 t^{-\az q}
|f-B_tf|^q\,\frac{dt}t\r]^{1/q}\right\|_{L^p(\rn)}\ls \|f\|_{\F(\rn)},
 \end{equation}
 since $\|f\|_{L^p(\rn)}\ls \|f\|_{\F(\rn)}.$

Indeed, by \eqref{crf}, for all $s,\,t\in(0,1)$ and $\xi\in\rn$,
 \begin{eqnarray*}
 (f- B_sf )^{\we} (\xi)&&= \wh\Phi(\xi)A(s|\xi|) \wh{f_1}(\xi)+\int_0^1 \wh{\vi} (t \xi) A ( s |\xi|)  \wh{f_t}(\xi)\,\frac{dt}t\\
&& =:  (T_{s, 1} f_1 )^{\we} (\xi)+\int_0^1  (T_{s, t} f_t )^{\we} (\xi)\,\frac{dt}t,
 \end{eqnarray*}
where  $T_{s, t}$ is given  by
 \begin{equation}\label{3-2}
 (T_{s, t} f_t )^{\we} (\xi):=\wh{\vi} (t \xi) A ( s |\xi|)  \wh{f_t}(\xi), \quad t\in(0,1),\ \ \xi\in\rn,
 \end{equation}
and
\begin{equation*}
 (T_{s, 1} f_1 )^{\we} (\xi):=\wh\Phi (\xi) A ( s |\xi|)  \wh{f_1}(\xi), \quad \xi\in\rn,
 \end{equation*}
with $\wh{f_t}:=\wh{\psi}(t\cdot)\wh{f}$ and $\wh{f_1}:=\wh{\Psi}(\cdot)\wh{f}$.
Therefore,
\begin{equation} \label{3-3}
f- B_s f = T_{s, 1} f_1 +\int_0^1  T_{s, t} f_t \,\frac{dt}t.
\end{equation}

For the integral part in \eqref{3-3}, we split $\int_0^1$ into two parts $\int_0^s$ and $\int_s^1$.
It is relatively easier
to deal with the first part. Indeed, for $t\in(0,s]$, by \eqref{3-2}, we find  that, for all $x\in\rn$,
\begin{eqnarray*}
|T_{s,t} f_t(x)|& = |( I- B_s ) ( f\ast\psi_t\ast \vi_t) (x)|
\ls M (f\ast\psi_t\ast \vi_t)(x).
 \end{eqnarray*}
From this, $\az\in(0,2)$ and the H\"{o}lder inequality, we deduce that
\begin{eqnarray}\label{3-4}
\int_0^1 s^{-\az q}
\lf|\int_0^s  T_{s, t} f_t \,\frac{dt}t\r|^q\,\frac{ds}s
&&\ls  \int_0^1 s^{-\az q}
\lf[\int_0^s  M (f\ast\psi_t\ast \vi_t) \,\frac{dt}t\r]^q\,\frac{ds}s\\
&&\ls \int_0^1 s^{-\frac{\az q}2}
\int_0^s \lf[M (f\ast\psi_t\ast \vi_t)\r]^q t^{-\frac{\az q}2} \,\frac{dt}t\,\frac{ds}s\noz\\
&&\ls \int_0^1 t^{-\az q} \lf[M (f\ast\psi_t\ast \vi_t)\r]^q \,\frac{dt}t\noz.
\end{eqnarray}

Now we  estimate the integral $\int_s^1$. For all $t\in(0,1)$, $s\in(0,t)$
and $\xi\in \rn$, write
$$(T_{s, t} f_t )^{\we} (\xi)=\wh{\vi} (t \xi) A ( s |\xi|)  \wh{f_t}(\xi)=:m_{s,t}(\xi) \wh{f_t}(\xi),$$
  where
\begin{eqnarray*}
m_{s,t} (\xi) :=
\wh{\vi} (t \xi)\f {A ( s |\xi|)} { ( s |\xi|)^2  } ( s |\xi|)^2,\qquad \xi\in\rn.
\end{eqnarray*}
Write $\wz m_{s,t}(\xi):= m_{s,t} (t^{-1}\xi)$. By Lemma \ref{lem-2-1}, we see that,
for all $t\in(s,1)$ and $\xi\in \rn$,
\begin{equation*}
 |\partial^\bz \wz m_{s,t} (\xi)| \ls  \lf(\f st\r)^2 \chi_{\overline{B(0,2)}\setminus B(0,1/2)}(\xi), \qquad    \bz\in\zz_+^n,
\end{equation*}
and thus
\begin{eqnarray*}
\|\wz m_{s,t}\|_{L^1(\rn)}
+\|\nabla^{n+1}\wz m_{s,t}\|_{L^1(\rn)}\ls  \lf(\f st\r)^2,
\end{eqnarray*}
which, together with Lemma \ref{lem-2-2}, further implies that
$$|T_{s,t} f_t(x)| \ls \lf(\f st\r)^2 M f_t(x),\qquad x\in\rn.$$
By this, for $\a\in (0, 2)$, taking $\bz:=1-\az/2>0$, together with
the H\"{o}lder inequality, we conclude that
\begin{eqnarray}\label{3-5}
\int_0^1 s^{-\az q}
\lf|\int_s^1  T_{s, t} f_t \,\frac{dt}t\r|^q\,\frac{ds}s
&&\ls  \int_0^1 s^{(2-\az) q}
\lf[\int_s^1  t^{-2}M (f_t) \,\frac{dt}t\r]^q\,\frac{ds}s\\
&&\ls \int_0^1 s^{(2-\az-\bz) q}
\int_s^1 \lf[M (f_t)\r]^q t^{(\bz-2) q} \,\frac{dt}t\,\frac{ds}s\noz\\
&&\ls \int_0^1 t^{(\bz-2) q} \lf[M (f_t)\r]^q \lf(\int_0^t s^{(2-\az-\bz) q} \,\frac{ds}s\r)\,\frac{dt}t
\noz\\
&&\sim \int_0^1 t^{-\az q} \lf[M (f_t)\r]^q \,\frac{dt}t\noz.
\end{eqnarray}

For the part  $T_{s, 1} f_1$ in \eqref{3-3}, we make use of the idea used in the above estimate for  $\int_s^1$, and find that
\begin{equation}\label{3-5x}
|T_{s,1} f_1(x)| \ls s^2 M (f_1)(x),\qquad x\in\rn.
\end{equation}

Combining \eqref{3-4}, \eqref{3-5} and \eqref{3-5x}  with \eqref{3-3}, using the Fefferman-Stein vector-valued maximal inequality (see \cite{fs71}), the
independence of
$\F(\rn)$ on the pair $(\vz,\Phi)$ (see Remark \ref{sr1}(i)) and Remark \ref{sr1}(ii),
we see that
\begin{eqnarray*}
&& \left\|\lf[\int_0^1 s^{-\az q}
|f-B_sf|^q\,\frac{ds}s\r]^{1/q}\right\|_{L^p(\rn)}\\
&&\quad \ls  \left\|\lf[\int_0^1 s^{(2-\az) q}
\,\frac{ds}s\r]^{1/q}M (f\ast \Psi)\right\|_{L^p(\rn)} +\left\|\lf\{\int_0^1 t^{-\az q}
\lf[M (f\ast \psi_t\ast \vz_t)\r]^q\,\frac{dt}t\r\}^{1/q}\right\|_{L^p(\rn)}\\
&&\quad\quad+\left\|\lf\{\int_0^1 t^{-\az q}
\lf[M (f\ast \psi_t)\r]^q\,\frac{dt}t\r\}^{1/q}\right\|_{L^p(\rn)}
\\
&&\quad\ls \|f\ast \Psi\|_{L^p(\rn)}+\left\|\lf[\int_0^1 t^{-\az q}
\lf|f\ast \psi_t\r|^q\,\frac{dt}t\r]^{1/q}\right\|_{L^p(\rn)}\\
&&\quad\quad+\left\|\lf[\int_0^1 t^{-\az q}
\lf|f\ast \psi_t\ast \vz_t\r|^q\,\frac{dt}t\r]^{1/q}\right\|_{L^p(\rn)}\\
&&\quad\sim\|f\|_{\F(\rn)},
\end{eqnarray*}
which proves \eqref{3-1}.

To show the inverse direction, we only need to prove
\begin{equation}\label{3-6}
\|f\|_{\F(\rn)}\ls \|f\|_{L^p(\rn)}+\left\|\lf[\int_0^1 s^{-\az q}
|f-B_sf|^q\,\frac{ds}s\r]^{1/q}\right\|_{L^p(\rn)}
\end{equation}
whenever $f\in L^p(\rn)$ and the right-hand side of \eqref{3-6}
is finite.
For this purpose, we first claim that
\begin{equation}\label{3-7}
|f\ast \vi_t(x)| \ls M ( f -B_t f)(x),\qquad t\in(0,1),\ \ x\in\rn.
\end{equation}
Indeed, we find that, for all $t\in(0,1)$ and $\xi\in\rn$,
\begin{align}\label{se9}
 ( f\ast \vi_t )^{\we}  (\xi) = \f {\wh{\vi} (t\xi) } { A ( t|\xi|)} ( f -B_t f)^{\we} (\xi) =:\eta(t\xi) ( f -B_t f)^{\we} (\xi),
\end{align}
where $\eta(\xi):=\f { \wh{\vi} (\xi) } { A(|\xi|)}$ for all $\xi\in\rn$, which is well defined due to \eqref{2-5}.
By Lemma \ref{lem-2-1}, we see that
$\eta\in C_c^\infty(\rn)$ and $\supp\eta \subset \{\xi\in\rn:  \     \  \f12 \leq |\xi|\leq 2\}$.  The claim \eqref{3-7} then follows from Lemma \ref{lem-2-2}.

On the other hand, it is easy to see that
$\|\Phi\ast f\|_{L^p(\rn)}\ls \|f\|_{L^p(\rn)}$.
From this, Remark \ref{sr1}(ii), \eqref{3-7} and the Fefferman-Stein vector-valued maximal inequality (see \cite{fs71}),
we deduce that
\begin{eqnarray*}
\|f\|_{\F(\rn)}&&\sim\|\Phi\ast f\|_{L^p(\rn)}+\lf\|\lf[\int_0^1 t^{-\az q}
\dashint_{B(\cdot,\,t)}\lf|\vz_t\ast
f(y)\r|^q\,dy\,\frac{dt}t\r]^{1/q}\r\|_{L^p(\rn)}\\
&&\ls  \|f\|_{L^p(\rn)}+ \left\|\lf\{\int_0^1 s^{-\az q}
\lf[M(f-B_sf)\r]^q\,\frac{ds}s\r\}^{1/q}\right\|_{L^p(\rn)}\\
&& \ls \|f\|_{L^p(\rn)}+\left\|\lf[\int_0^1 s^{-\az q}
|f-B_sf|^q\,\frac{ds}s\r]^{1/q}\right\|_{L^p(\rn)}\\
&&\sim\||f|\|_{\F(\rn)}.
\end{eqnarray*}
This finishes the proof of Theorem \ref{t-bf}.
\end{proof}

Now we prove Theorem \ref{apcF2}.
\begin{proof}[Proof of Theorem \ref{apcF2}]
Let all notation be the same as in the proof of Theorem \ref{t-bf}.
We first prove (i)$\Longrightarrow$(ii). Let $f\in \F(\rn)$.
By the Fefferman-Stein
vector-valued maximal inequality (see \cite{fs71}) and Theorem \ref{t-bf}, we see that, for all $r\in[1,q)$,
\begin{eqnarray*}
 &&\left\|\lf\{\int_0^1 s^{-\az q}
\lf[\dashint_{B(\cdot,s)}\lf|f-B_sf\r|^r\r]^{\frac qr}
\frac{ds}s\r\}^{1/q}\right\|_{L^p(\rn)}\\
&&\hs\ls \left\|\lf\{\int_0^1 s^{-\az q}
\lf[M\lf(\lf|f-B_sf\r|^r\r)\r]^{\frac qr}
\frac{ds}s\r\}^{1/q}\right\|_{L^p(\rn)}\\
&&\hs\ls\left\|\lf\{\int_0^1 s^{-\az q}
\lf|f-B_sf\r|^q
\frac{ds}s\r\}^{1/q}\right\|_{L^p(\rn)}
\ls\|f\|_{\F(\rn)},
 \end{eqnarray*}
which finishes
the proof of (i)$\Longrightarrow$(ii).

Conversely, we show (ii)$\Longrightarrow$(i). Since $\eta$ in \eqref{se9} is a Schwartz function, by \eqref{se9},
we observe that, for all $t\in(0,1)$ and $x\in\rn$,
\begin{eqnarray*}
\dashint_{B(x,t)}\lf|\vi_t\ast f (y)\r|\,dy&&=
\dashint_{B(x,t)}\lf|(\eta(t\cdot))^\vee\ast (f -B_t f)(y)\r|\,dy\\
&&\ls\int_{\rn}\lf|(\eta(t\cdot))^\vee(z)\r|\dashint_{B(x,t)}\lf|(f -B_t f)(y-z)\r|\,dy\,dz\\
&&\ls M\lf(\dashint_{B(\cdot,t)}\lf|(f -B_t f)(y)\r|\,dy\r)(x).
\end{eqnarray*}
From this, by Remark \ref{sr1}(ii), the Fefferman-Stein
vector-valued maximal inequality (see \cite{fs71}) and the H\"{o}lder inequality, we find that, for all $r\in[1,q)$,
\begin{eqnarray*}
\|f\|_{F^\az_{p,\,q}(\rn)}
 &&\sim\|\Phi\ast f\|_{L^p(\rn)}+\lf\|\lf\{\int_0^1 t^{-\az q}
\lf[\dashint_{B(\cdot,\,t)}\lf|\vz_t\ast
f(y)\r|\,dy\r]^q\,\frac{dt}t\r\}^{1/q}\r\|_{L^p(\rn)}\\
 &&\ls\|\Phi\ast f\|_{L^p(\rn)}\\
 &&\quad+\lf\|\lf\{\int_0^1 t^{-\az q}
\lf[M \lf(\dashint_{B(\cdot,t)} \lf|(f -B_t f)(y)\r|\,dy\r)\r]^q\,\frac{dt}t\r\}^{1/q}\r\|_{L^p(\rn)}\\
&&\ls\|\Phi\ast f\|_{L^p(\rn)}+\left\|\lf\{\int_0^1 t^{-\az q}\lf[\dashint_{B(\cdot,t)}
|(f-B_tf)(y)|\,dy\r]^q\,\frac{dt}t\r\}^{1/q}\right\|_{L^p(\rn)}\\
&&\ls\|\Phi\ast f\|_{L^p(\rn)}+\left\|\lf\{\int_0^1 t^{-\az q}\lf[\dashint_{B(\cdot,t)}
|(f-B_tf)(y)|^r\,dy\r]^{\frac qr}\,\frac{dt}t\r\}^{1/q}\right\|_{L^p(\rn)}.
\end{eqnarray*}
This finishes the proof of (ii)$\Longrightarrow$(i) and hence the proof of Theorem \ref{apcF2}.
\end{proof}

Now, we prove Theorem \ref{apcF0}.
\begin{proof}[Proof of Theorem \ref{apcF0}]
Let all notation be the same as in the proof of Theorem \ref{t-bf}.

We first prove (i). If $p\in[q,\fz)$ and $\al\in(0,2)$, then the desired conclusion follows from Theorem \ref{t-bf}, and
 (i) and (iii) of Lemma \ref{lem-2-3}. Now we assume that $p\in(1,q)$ and $\a\in (n(1/p-1/q),1)$.
 Let $f\in \F(\rn)$.
Notice that, for all $t\in(0,1)$ and $x\in\rn$, we have
\begin{eqnarray*}
\dashint_{B(x,t)}\lf|f(y)-B_tf(y)\r|^q\,dy
&&\ls\dashint_{B(x,t)}\dashint_{B(y,t)}\lf|f(y)-f(z)\r|^q\,dz\,dy\\
&&\ls\dashint_{B(x,2t)}\lf|f(y)-f(x)\r|^q\,dy.
\end{eqnarray*}
From this and Remark \ref{sr1}(ii), we deduce that
\begin{eqnarray*}
&&\lf\|\lf[\int_0^1 t^{-\az q}
\dashint_{B(\cdot,\,t)}\lf|f(y)-B_tf(y)\r|^q\,dy\frac{dt}t\r]^{1/q}\r\|_{L^p(\rn)}\\
&&\hs\ls\lf\|\lf[\int_0^1 t^{-\az q}
\dashint_{B(\cdot,\,t)}\lf|f(y)-f(\cdot)\r|^q\,dy\frac{dt}t\r]^{1/q}\r\|_{L^p(\rn)}\\
&&\hs\ls\|f\|_{\F(\rn)},
\end{eqnarray*}
which finishes
the proof of Theorem \ref{apcF0}(i) .

Now we show (ii). Notice that, if $\widetilde{\||f|\|}_{\F(\rn)}<\fz$, then
$\||f|\|_{\F(\rn)}^{(1)}<\fz$ due to the H\"older inequality. Then,
by Theorem \ref{apcF2} and the H\"older inequality, we have
\begin{eqnarray*}
\|f\|_{F^\az_{p,\,q}(\rn)}
&&\ls\|f\|_{L^p(\rn)}+\lf\|\lf[\int_0^1 t^{-\az q}
\lf[\dashint_{B(\cdot,t)}\lf| (f -B_t f)(y)\r|\,dy\r]^q\,\frac{dt}t\r]^{1/q}\r\|_{L^p(\rn)}\\
&&\ls\|f\|_{L^p(\rn)}+\left\|\lf[\int_0^1 t^{-\az q}\dashint_{B(\cdot,t)}
|(f-B_tf)(y)|^q\,dy\,\frac{dt}t\r]^{1/q}\right\|_{L^p(\rn)}\\
&&\sim\widetilde{\||f|\|}_{\F(\rn)}.
\end{eqnarray*}
This finishes the proof of Theorem \ref{apcF0}(ii) and hence the proof of Theorem \ref{apcF0}.
\end{proof}

Now we employ Theorems \ref{t-bf} and \ref{apcF0} to prove Theorem \ref{apcF1}.
\begin{proof}[Proof of Theorem \ref{apcF1}]
We first show (i). Let $f\in \F(\rn)$.
For the case when $\a\in (n(1/p-1/q),1)$ and $p\in(1,\,q)$,
 by Lemma \ref{lem-2-3}(ii) with

\begin{equation*}
F:=\mathcal{F}_\al(x,t):=\lf|\frac{B_tf(x)-f(x)}{t^\al}\r|,\quad (x,t)\in\rn\times (0,\fz),
\end{equation*} we see that, for all $\lambda\in(q/p,\fz)$ and $x\in\rn$,
 $$\lf\|\mathcal{S}_{\bz}(\mathcal{F}_\al)\r\|_{L^p(\rn)}\ls
 \bz^{n(\frac1p-\frac1q)}\|\mathcal{S}(\mathcal{F}_\al)\|_{L^p(\rn)},$$
 which, combined with
\begin{eqnarray*}
\lf[\mathcal{G}_{\lz}^\ast(\mathcal{F}_\al)\r]^q
&&=\int_{0}^{1}t^{-\alpha q}
 \int_{|x-y|<t}\lf(\frac{t}{t+|x-y|}\r)^{\lambda
 n}\lf|f(y)-B_tf(y)\r|^q\frac{dy\, dt}{t^{n+1}}\\
 &&\hs+\sum_{k=1}^{\infty}\int_{0}^{1}
 \int_{2^{k-1}t\le|x-y|<2^kt}\cdots\\
&&\le\sum_{k=0}^{\infty}2^{-kn(\lambda-1)}
\lf[\mathcal{S}_{2^k}(\mathcal{F}_\al)\r]^q\noz
\end{eqnarray*}
and $\lambda/q>1/p$, further implies that
\begin{eqnarray*}
\lf\|\mathcal{G}_{\lz}^\ast(\mathcal{F}_\al)\r\|_{L^p(\rn)}&&\ls\sum_{k=0}^{\infty}2^{-\frac kqn(\lambda-1)}
\lf\|\mathcal{S}_{2^k}(\mathcal{F}_\al)\r\|_{L^p(\rn)}\\
&&\ls\sum_{k=0}^{\infty}2^{-\frac kqn(\lambda-1)}2^{k(\frac1p-\frac 1q)n}\lf\|\mathcal{S}(\mathcal{F}_\al)\r\|_{L^p(\rn)}\\
&&\sim\lf\|\mathcal{S}(\mathcal{F}_\al)\r\|_{L^p(\rn)}.
\end{eqnarray*}
By this, we see that the desired conclusion follows from Theorem \ref{apcF0}.
For the case when $\a\in (0,2)$ and $p\in[q,\,\fz)$, the desired conclusion in Theorem \ref{apcF1}(i) follows from Lemma \ref{lem-2-3}(iii) and Theorem \ref{t-bf}.

 Now we show (ii). By Lemma \ref{lem-2-3}(i), we know that $\mathcal{S}(\mathcal{F}_\al)(x)\ls \mathcal{G}_{\lz}^\ast(\mathcal{F}_\al)(x)$
for all $x\in\rn$. Then for $f\in L^p(\rn)$ with
$||\mathcal{G}_{\lz}^\ast(\mathcal{F}_\al)||_{L^p(\rn)}<\fz$, by Theorem \ref{apcF0},
we see that $$\|f\|_{\F(\rn)}
\ls\|f\|_{L^p(\rn)}+||\mathcal{S}(\mathcal{F}_\al)||_{L^p(\rn)}
\ls\|f\|_{L^p(\rn)}+||\mathcal{G}_{\lz}^\ast(\mathcal{F}_\al)||_{L^p(\rn)}.$$
This finishes the proof of Theorem \ref{apcF1}(ii) and hence
 the proof of Theorem \ref{apcF1}.
\end{proof}

 Now we use Theorems \ref{t-bf} and \ref{apcF0} to prove Theorem \ref{apcF}.
\begin{proof}[Proof of Theorem \ref{apcF}] \emph{Step 1}.
Let $\az\in(0,2)$ and $p\in(1,\fz)$. We first show (i)$\Longrightarrow$(ii). Assume that $f\in F^\az_{p,\,\fz}(\rn)$. Then, by Theorem \ref{t-bf}, we have $f\in L^p(\rn)$ and
$$
\|f\|_{L^p(\rn)}+\lf\|\sup_{t\in(0,1)}t^{-\alpha}\lf|f-B_tf\r|\r\|_{L^p(\rn)}
\ls\lf\|f\r\|_{F^\az_{p,\,\fz}(\rn)}<\fz.$$
For any $x\in\rn$, let $g(x):=\sup_{t\in(0,1)}t^{-\alpha}|(f-B_tf)(x)|$. Clearly, we see that
$g\in L^p(\rn)$ and $$\lf|(f-B_tf)(x)\r|\leq t^{\al}g(x),\ \ \ x\in\rn.$$
Moreover, $\|f\|_{L^p(\rn)}+\|g\|_{L^p(\rn)}\ls\|f\|_{F^\az_{p,\,\fz}(\rn)}$. This proves (ii).

Next we show (ii)$\Longrightarrow$(i). Assume that $f\in L^p(\rn)$ and there exists a non-negative
$g\in L^p(\rn)$ such that  $\lf|(f-B_tf)(x)\r|\ls t^{\alpha}g(x)$
for all $t\in(0,1)$ and almost every $x\in\rn$. Thus,
$$\|f\|_{L^p(\rn)}+\lf\|\sup_{t\in(0,1)}t^{-\alpha}\lf|f-B_tf\r|\r\|_{L^p(\rn)}
\ls\|f\|_{L^p(\rn)}+\|g\|_{L^p(\rn)}<\fz,$$
which, together with Theorem \ref{t-bf}, implies that $f\in F^\az_{p,\,\fz}(\rn)$. This finishes the proof
of (i)$\Longleftrightarrow$(ii).

\emph{Step 2}. Let $\az\in(n/p,1)$ and $p\in(1,\fz)$. We now show (i)$\Longrightarrow$(iii).
Assume that $f\in F^\az_{p,\,\fz}(\rn)$. Then, by Theorem \ref{apcF0}, we have
$f\in L^p(\rn)$ and
$$
\|f\|_{L^p(\rn)}+\lf\|\sup_{t\in(0,1)}\sup_{x\in B(\cdot,t)}t^{-\alpha}\lf|(f-B_tf)(x)\r|\r\|_{L^p(\rn)}
\ls\lf\|f\r\|_{ F^\az_{p,\,\fz}(\rn)}<\fz.$$
For all $y\in\rn$, let $g(y):=\sup_{t\in(0,1)}\sup_{x\in B(y,t)}t^{-\alpha}|(f-B_tf)(x)|$. Clearly,
$g\in L^p(\rn)$ and, for all $t\in(0,1)$ and almost every $x\in\rn$ and $y\in B(x,t)$,
$$\lf|(f-B_tf)(x)\r|\leq t^{\al}g(y).$$

Finally, we show  (iii)$\Longrightarrow$(ii). Assume that $f\in L^p(\rn)$ and there exist a non-negative
$g\in L^p(\rn)$ and positive constants $C_1,\,C_2$ such that  $t^{-\alpha}\lf|(f-B_tf)(x)\r|\leq C_2g(y)$
for all $t\in(0,1)$ and almost every $x\in\rn$ and $y\in B(x,C_1t)$. Therefore,
\begin{eqnarray*}
\lf|(f-B_tf)(x)\r|\leq C_2t^\al\dashint_{B(x,C_1t)}g(y)\,dy\ls t^{\al}Mg(x).
\end{eqnarray*}
Noticing that $g\in L^p(\rn)$ implies $Mg\in L^p(\rn)$, we see that (ii) holds true and
hence the proof of Theorem \ref{apcF} is finished.
\end{proof}

\begin{remark}
By the above proof, we know that (iii)$\Longrightarrow$(ii) holds true for all $\al\in(0,2)$.
The condition $\al\in(n/p,1)$ is only used for the proof of (i)$\Longrightarrow$(iii).
\end{remark}
Now we prove Theorem \ref{t1-x}.
\begin{proof}[Proof of Theorem \ref{t1-x}] By the H\"older inequality, we immediately see
that (ii)$\Longrightarrow$(iii) for all $\al\in(0,2)$.

Next, we show (iii)$\Longrightarrow$(i) when $\al\in(0,2)$. Assume that $f\in L^p(\rn)$ and there exists a non-negative
$g\in L^p(\rn)$ such that, for all $t\in(0,1)$ and almost every $x\in\rn$,
\begin{equation*}
|f(x)-B_t f(x)|\ls t^\al \lf\{\dashint_{B(x,C_6t)} [g(y)]^q\,dy\r\}^{1/q}.
\end{equation*}
Then, for all $t\in(0,1)$ and almost every $x\in\rn$,
\begin{equation*}
|f(x)-B_t f(x)|
\ls t^\al\lf[M(g^q)(x)\r]^{1/q}.
\end{equation*}
Since $g\in L^p(\rn)$ and $q\in[1,p)$, it follows that $\lf[M(g^q)(x)\r]^{1/q}\in L^p(\rn)$, which, together with the equivalence between (i) and (ii) of Theorem \ref{apcF}, implies $f\in F^\az_{p,\,\fz}(\rn)$.
This proves (i).

Finally, we show (i)$\Longrightarrow$(ii) when $\al\in(n/p,1)$. Let $f\in F^\az_{p,\,\fz}(\rn)$.
Then, by the equivalence between (i) and (iii) of Theorem \ref{apcF}, we know that $f\in L^p(\rn)$ and there exist a non-negative
$g\in L^p(\rn)$ and positive constants $C_3,\,C_4$ such that, for all $t\in(0,1)$ and almost every $x\in\rn$ and $y\in B(x,C_3t)$,
$\lf|(f-B_tf)(x)\r|\leq C_4t^{\alpha}g(y)$. Therefore
$$\lf|(f-B_tf)(x)\r|\leq C_4 t^{\alpha}\inf_{y\in B(x,C_3t)}g(y)
\leq C_4 t^{\alpha}\dashint_{B(x,C_3t)}g(y)\,dy.$$
This prove (ii) and hence
 finishes the proof of  Theorem \ref{t1-x}.
\end{proof}

Now we prove Theorem \ref{t1-xx}.
\begin{proof}[Proof of Theorem \ref{t1-xx}]
By the H\"older inequality, we see that, for all $\al\in(0,2)$,
\begin{eqnarray*}
{\rm(ii)}\Longrightarrow{\rm(v)}\Longrightarrow{\rm(vii)}\Longrightarrow{\rm(vi)}
\end{eqnarray*}
 and
 \begin{eqnarray*}
{\rm(ii)}\Longrightarrow{\rm(iv)}\Longrightarrow{\rm(iii)}\Longrightarrow{\rm(vi)}.
\end{eqnarray*}
Therefore, to complete the proof, it suffices to show (i)$\Longrightarrow$(ii) and (vi)$\Longrightarrow$(i).

Now we prove (vi)$\Longrightarrow$(i) when $\al\in(0,2)$. Assume that $f$ satisfies (vi). Then there exist a non-negative $g\in L^p(\rn)$ and positive constants
$C$ and $\widetilde{C}$ such that, for all $t\in(0,1)$ and almost every $x\in\rn$,
\begin{eqnarray}\label{se11}
&&\dashint_{B(x,t)} \lf|f(y)-B_{Ct} f(y)\r|\,dy\\
&&\hs\ls t^\al \lf\{\dashint_{B(x,\widetilde{C}t)} [g(y)]^q\,dy\r\}^{1/q}\noz\\
&&\hs\ls t^\al\lf[M(g^q)(x)\r]^{1/q}.\noz
\end{eqnarray}
Notice that $g\in L^p(\rn)$ and $q\in[1,p)$ implies $\lf[M(g^q)(x)\r]^{1/q}\in L^p(\rn)$.
From this, combined with \eqref{se11} and Theorem \ref{apcF2}, we deduce that
\begin{eqnarray*}\|f\|_{ F^\az_{p,\,\fz}(\rn)}&&\ls \|f\|_{L^p(\rn)}+
\lf\|\sup_{t\in(0,1)}t^{-\al}\dashint_{B(\cdot,t)} \lf|f(y)-B_{Ct} f(y)\r|\,dy\r\|_{L^p(\rn)}\\
&&\ls\|f\|_{L^p(\rn)}+\lf\|\lf[M(g^q)\r]^{1/q}\r\|_{L^p(\rn)}<\fz,
\end{eqnarray*}
which implies $f\in F^\az_{p,\,\fz}(\rn)$ for all $\al\in(0,2)$. This proves (i).

Finally, We prove (i)$\Longrightarrow$(ii) when $\al\in(n/p,1)$. Let $f\in F^\az_{p,\,\fz}(\rn)$. Then, by the equivalence between (i) and (iii) in Theorem \ref{apcF}, we see that $f\in L^p(\rn)$ and there exist a non-negative
$g\in L^p(\rn)$ and a positive constants $C$ such that, for all $t\in(0,1)$, almost every $y\in\rn$ and $z\in B(y,Ct)$,
$\lf|f(y)-B_tf(y)\r|\ls t^{\alpha}g(z)$. Therefore, for almost every $x\in\rn$ and $y\in B(x,t)$
$$\lf|f(y)-B_tf(y)\r|\ls t^{\alpha}\inf_{z\in B(y,Ct)}g(z)
\ls t^{\alpha}\dashint_{B(y,Ct)}g(z)\,dz\ls t^{\alpha}\dashint_{B(x,(1+C)t)}g(z)\,dz.$$
Thus,
$$\sup_{y\in B(x,t)}\lf|f(y)-B_tf(y)\r|\ls t^{\alpha}\dashint_{B(x,(1+C)t)}g(z)\,dz.$$
This proves (ii) and hence finishes the proof of Theorem \ref{t1-xx}.
\end{proof}

Finally we prove Theorem \ref{t1-xxx}.
\begin{proof}[Proof of Theorem \ref{t1-xxx}]
By the H\"older inequality, it is easy to see that, for all $\al\in(0,2)$, (ii)$\Longrightarrow$(iii)$\Longrightarrow$(iv)
and (v)$\Longrightarrow$(vi)$\Longrightarrow$(vii).

Next we prove
(iv)$\Longrightarrow$(i) and (vii)$\Longrightarrow$(i) when $\al\in(0,2)$. If (iv) holds true, then, by Theorem
\ref{apcF2}, we see that (i) holds true; if (vii)
holds true, then Theorem \ref{t1-xx}(iii) holds true, which further implies (i).
On the other hand, from Theorem \ref{apcF2}, we deduce that (i) implies (iii) for $\al\in(0,2)$.

It remains to prove (i)$\Longrightarrow$(ii)  and (i)$\Longrightarrow$(v) when $\al\in(n/p,1)$. Indeed, if (i) holds true, then Theorem \ref{t1-xx}(ii) holds true, which further implies (ii) and (v). This finishes
the proof of Theorem \ref{t1-xxx}.
\end{proof}

\section{Further Remarks}\label{s3}
\hskip\parindent
In this section, we first generalize some items of Theorems \ref{t-bf}, \ref{apcF2}, \ref{apcF0}, \ref{apcF1} and \ref{apcF} through \ref{t1-xxx} to the higher order Triebel-Lizorkin spaces  with order bigger than 2. As a further application, we then prove that some items in Theorems \ref{apcF} through \ref{t1-xxx} are still
equivalent on spaces of homogeneous type,
which can be used to define the Triebel-Lizorkin spaces on spaces of homogeneous type with
the smoothness order $\al\in(0,2)$.

\subsection{Higher Order Triebel-Lizorkin Spaces  with Order Bigger Than 2}\label{s3.1}
\hskip\parindent
In this subsection, we consider the higher order counterparts of Theorems \ref{t-bf}, \ref{apcF2}, \ref{apcF0}, \ref{apcF1} and \ref{apcF} through \ref{t1-xxx},
namely, the corresponding characterizations of Triebel-Lizorkin spaces $ F^{\al}_{p,\,q}(\rn)$
with $\ell\in\mathbb{N},\,p\in(1,\fz),\,q\in(1,\fz]$ and $\al\in(0,2\ell)$. For this purpose, we need to replace the
average operator $B_t$ by its higher order variants. For all $\ell\in\nn$, $t\in(0,\fz)$ and $x\in\rn$, define  the
\emph{$2\ell$-th order average operator   $B_{\ell,t}$}
 by setting, for all $f\in L_\loc^1(\rn)$ and $x\in\rn,$
 \begin{equation*}
 B_{\ell,t}f(x) := -\frac{2}{\binom{2\ell}{\ell} }
 \sum_{j=1}^\ell (-1)^j \binom {2\ell}{\ell-j} B_{jt} f(x),
 \end{equation*}
here and hereafter, $\binom {2\ell}{\ell-j}$ denotes the \emph{binomial coefficients}. Obviously, $B_{1,t}f=B_tf$. Moreover,
 $$( B_{\ell, t} f)(x) =\f {-2} {\binom{2\ell}{\ell}} \sum_{j=1}^\ell (-1)^j \binom{2\ell}{\ell-j}
 ( f * I_{jt})(x),\quad x\in\rn,\ \ t\in(0,\fz).$$

 If we replace the
average operator $B_t$ by $B_{\ell,t}$ in Theorems \ref{t-bf}, \ref{apcF2}, \ref{apcF0}, \ref{apcF1}
and \ref{apcF} through \ref{t1-xxx}, then, by \cite[Lemmas 2.1 and 2.2]{dgyy0}, we have the following theorem, whose proof is similar to the corresponding part of Theorems \ref{t-bf}, \ref{apcF2}, \ref{apcF0}, \ref{apcF1}
and \ref{apcF} through \ref{t1-xxx}, the details being omitted.

\begin{theorem}\label{tt3}
Let $\ell\in\mathbb{N},\,p\in(1,\fz),\,q\in(1,\fz],\,t\in(0,1)$ and $\al\in(0,2\ell)$. Then the conclusions of
Theorems \ref{t-bf} and \ref{apcF2}, (ii) of Theorems \ref{apcF0} and \ref{apcF1}, and the statements b) of Theorems
\ref{apcF} through \ref{t1-xxx} remain hold true when $B_t$ is replaced by $B_{\ell,t}$.
\end{theorem}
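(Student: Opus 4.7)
The plan is to rerun the entire cascade of proofs in Section \ref{s2}, with the only structural change being the substitution of $B_t$ by $B_{\ell,t}$ and the corresponding replacement of the second-order symbol $A(s)$ in Lemma \ref{lem-2-1} by a $2\ell$-th order symbol $A_\ell(s)$ satisfying $(f - B_{\ell,t} f)^\wedge(\xi) = A_\ell(t|\xi|) \hat f(\xi)$. The key input is the generalization of Lemma \ref{lem-2-1} to this higher-order setting, asserted in \cite[Lemmas 2.1 and 2.2]{dgyy0}: the function $s^{-2\ell} A_\ell(s)$ is smooth on $\mathbb{R}$ with all derivatives bounded and satisfies a two-sided bound of the form $0 < c_1 \leq A_\ell(s)/s^{2\ell} \leq c_2$ on a suitable interval. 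The gain is that the multiplier $\widetilde m_{s,t}(\xi) := \hat\varphi(\xi)\,[A_\ell(s|\xi|/t)/(s|\xi|/t)^{2\ell}]\,(s|\xi|/t)^{2\ell}$ then yields the sharper pointwise estimate $|T_{s,t} f_t(x)| \lesssim (s/t)^{2\ell} Mf_t(x)$, which is precisely what is needed to push the smoothness range up to $\alpha \in (0, 2\ell)$: one chooses $\beta := \ell - \alpha/2 > 0$ in the H\"older step of the $\int_s^1$ estimate.

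First I would establish the analog of Theorem \ref{t-bf}. Apply the Calder\'on reproducing formula decomposition
$$f - B_{\ell,s} f = T_{s,1} f_1 + \int_0^s T_{s,t} f_t \, \frac{dt}{t} + \int_s^1 T_{s,t} f_t \, \frac{dt}{t}$$
and bound each piece exactly as in the proof of Theorem \ref{t-bf}, invoking the $2\ell$-th order decay above for the tail integral and for $T_{s,1} f_1$. The $\int_0^s$ piece is still dominated by $M(f \ast \psi_t \ast \varphi_t)$ via the $L^p$-boundedness of $I - B_{\ell,s}$ and causes no new difficulty. For the reverse direction I would use $\eta_\ell(\xi) := \hat\varphi(\xi)/A_\ell(|\xi|)$ in place of $\eta$, which remains a $C_c^\infty(\mathbb{R}^n)$ function with support in $\{1/2 \leq |\xi| \leq 2\}$ thanks to the lower bound on $A_\ell(s)/s^{2\ell}$; Lemma \ref{lem-2-2} then gives $|f \ast \varphi_t(x)| \lesssim M(f - B_{\ell,t} f)(x)$, and the remainder of the argument is identical. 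With this analog of Theorem \ref{t-bf} in hand, the extensions of Theorem \ref{apcF2}, part (ii) of Theorems \ref{apcF0} and \ref{apcF1}, and the statements b) of Theorems \ref{apcF} through \ref{t1-xxx} follow without any further modification, since each of those proofs uses the $B_t$-characterization purely as a black box together with H\"older's inequality, the Fefferman--Stein vector-valued maximal inequality, Lemma \ref{lem-2-3}, and elementary pointwise estimates that are insensitive to the order of the averaging operator.

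The only substantive obstacle is the higher-order version of Lemma \ref{lem-2-1}, namely, that the Taylor coefficients of $A_\ell(s)$ at the origin vanish through order $2\ell - 1$. This reduces to the vanishing of the symmetric differences $\sum_{j=1}^\ell (-1)^j \binom{2\ell}{\ell-j} j^{2k}$ for $k = 1, \ldots, \ell - 1$, a standard combinatorial fact equivalent to the annihilation of polynomials of degree less than $2\ell$ by the $2\ell$-th order difference operator built from the coefficients $\binom{2\ell}{\ell-j}$. Since this verification, together with the smoothness and two-sided bounds on $s^{-2\ell} A_\ell(s)$, is already carried out in \cite[Lemmas 2.1 and 2.2]{dgyy0}, Theorem \ref{tt3} really does reduce to the routine substitutions indicated above, which explains why the details can legitimately be omitted.
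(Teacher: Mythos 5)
Your proposal is correct and follows exactly the route the paper intends: the paper's own ``proof'' of Theorem \ref{tt3} is precisely the observation that \cite[Lemmas 2.1 and 2.2]{dgyy0} supply the $2\ell$-th order analogue of Lemma \ref{lem-2-1} (vanishing of the symbol to order $2\ell$, two-sided bounds on $A_\ell(s)/s^{2\ell}$, and bounded derivatives), after which the arguments of Section \ref{s2} go through verbatim with $(s/t)^2$ replaced by $(s/t)^{2\ell}$ and $\beta:=\ell-\az/2$. You have correctly identified both the single substantive ingredient and why the remaining items reduce to black-box applications of the Theorem \ref{t-bf} analogue.
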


\subsection{Triebel-Lizorkin Spaces on Spaces of Homogeneous Type}\label{s3.2}
\hskip\parindent
In this subsection, $(X,\,\rho,\,\mu)$ always denotes
a metric measure space of homogeneous type. Recall that a \emph{quasi-metric} on a nonempty set
$X$ is a non-negative function $\rho$ on $X\times X$ which satisfies

(i) for any $x,\,y\in X,\ \rho(x,y)=0$ if and only if $x=y$;

(ii) for any $x,\,y\in X,\ \rho(x,y)=\rho(y,x)$;

(iii) there exists a constant $K\in[1,\fz)$ such that, for any $x,\,y,\,z\in X$,
\begin{eqnarray}\label{te1}
\rho(x,y)\leq K\lf[\rho(x,z)+\rho(z,y)\r].
\end{eqnarray}
Let $\rho$ be a quasi-metric on $X$, a triple $(X,\rho,\mu)$ is called a \emph{space of homogeneous type} in the sense of Coifman and Weiss \cite{cw71,cw77} if $\mu$ is a regular Borel measure satisfying
the following doubling condition, that is, there exists a constant $\widetilde{C}\in[1,\fz)$ such that,
for all $r\in(0,\fz)$ and $x\in X$,
\begin{eqnarray}\label{te2}
\mu(B(x,2r))\leq \widetilde{C}\mu(B(x,r)),
\end{eqnarray}
where, for any given $r\in(0,\fz)$ and $x\in X$, let
$$B(x,r):=\lf\{y\in X:\ \rho(x,y)<r\r\}$$
be the quasi-metric ball related to $\rho$ of radius $r$ and centering at $x$.

The triple $(X,\rho,\mu)$
is called a \emph{metric measure space of homogeneous type} if $K=1$ in \eqref{te1}
in the above definition of the space of homogeneous type.

Clearly, if $\mu$ is doubling, then, for any $\gamma\in(0,\fz)$, there exists a positive
constant $C_\ga$, which depends on $\ga$ and $\widetilde{C}$ in \eqref{te2}, such that, for all
$r\in(0,\fz)$ and $x\in X$,
\begin{eqnarray*}
\mu(B(x,\ga r))\leq C_{\ga}\mu(B(x,r)).
\end{eqnarray*}

For all $x\in X$ and $t\in(0,\fz)$, let $B(x,t)$ denote a ball with center at
$x$ and radius $t$, and
$\dashint_{B(x,t)} f(y) \,d\mu(y)$ denote the \emph{integral average} of
$f\in L^1_\loc(X)$ on the ball $B(x,t)\subset X$, that is,
\begin{equation*}
B_tf(x):=\dashint_{B(x,t)} f(y) \,d\mu(y):= \frac1{\mu(B(x,t))}\int_{B(x,t)} f(y) \,d\mu(y).
\end{equation*}
Then we have the following conclusions.

\begin{theorem}\label{tt1}
Let $\al\in(0,2),\,p\in(1,\fz)$ and $f\in L^1_\loc(X)$. The following statements are equivalent:

{\rm(i)} there exist
a non-negative $g\in L^p(X)$ and positive constants
$c$, $C$, $\widetilde{C}$ such that, for all $t\in(0,\fz)$ and almost every $x\in X$ and
$y\in B(x,\,ct)$,
\begin{equation*}
\sup_{z\in B(x,t)}\lf|f(z)-B_{Ct} f(z)\r|\le \widetilde{C} t^\al g(y);
\end{equation*}

{\rm(ii)}  there exist a non-negative $g\in L^p(X)$ and positive constants
$c$, $C$, $\widetilde{C}$ such that, for all $t\in(0,\fz)$ and almost every $x\in X$,
\begin{eqnarray*}
\sup_{y\in B(x,t)}\lf|f(y)-B_{Ct} f(y)\r|\le \widetilde{C}t^\al \dashint_{B(x,ct)} g(y)\,d\mu(y);
\end{eqnarray*}

{\rm(iii)} there exist $q\in[1,p)$,
a non-negative $g\in L^p(X)$ and positive constants
$c$, $C$, $\widetilde{C}$ such that, for all $t\in(0,\fz)$ and almost every $x\in X$,
\begin{equation*}
\sup_{y\in B(x,t)}\lf|f(y)-B_{ C t} f(y)\r|\le\widetilde{C} t^\al \lf\{\dashint_{B(x,ct)} [g(y)]^q\,d\mu(y)\r\}^{1/q};
\end{equation*}

{\rm (iv)} there exist a non-negative $g\in L^p(X)$ and positive constants $c,\,C$, $\widetilde{C}$
such that, for all
$t\in(0,\fz)$ and almost every $x\in X$ and $y\in B(x,\,ct)$,
\begin{eqnarray*}
\lf|f(x)-B_{Ct}f(x)\r|\leq \widetilde{C}t^{\az}g(y);
\end{eqnarray*}

{\rm(v)} there exist a non-negative $g\in L^p(X)$ and positive constants
$c$, $C$, $\widetilde{C}$ such that, for all $t\in(0,\fz)$ and almost every $x\in X$,
\begin{equation*}
|f(x)-B_{Ct} f(x)|\le \wz C t^\al \dashint_{B(x,ct)} g(y)\,d\mu(y);
\end{equation*}

{\rm(vi)} there exist $q\in[1,p)$, a non-negative $g\in L^p(X)$ and positive constants
$c$, $C$, $\widetilde{C}$ such that, for all $t\in(0,\fz)$ and almost every $x\in X$,
\begin{equation*}
|f(x)-B_{Ct} f(x)|\le \wz C t^\al \lf\{\dashint_{B(x,ct)} [g(y)]^q\,d\mu(y)\r\}^{1/q}.
\end{equation*}
\end{theorem}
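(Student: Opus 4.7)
The plan is to view the six statements as a $2\times 3$ array: the ``sup'' row (i)--(iii), in which the left-hand side is $\sup_{z\in B(x,t)}|f(z)-B_{Ct}f(z)|$, and the ``point'' row (iv)--(vi), in which the left-hand side is $|f(x)-B_{Ct}f(x)|$; the three columns differ in the right-hand side, using respectively an arbitrary value $g(y)$, the ball average $\dashint_{B(x,ct)}g\,d\mu$, and the $L^q$-ball average $[\dashint_{B(x,ct)}g^q\,d\mu]^{1/q}$. The equivalences will follow from four elementary mechanisms: the observation $\inf_y g(y)\le\dashint g\,d\mu$; H\"older's inequality, since $q\ge 1$; the doubling condition \eqref{te2} together with the quasi-metric inequality \eqref{te1}, used to compare balls with nearby centers; and the Hardy--Littlewood maximal operator $M$ on $(X,\rho,\mu)$, which is bounded on $L^r(X)$ for all $r\in(1,\fz]$ on any space of homogeneous type.

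I would first record the ``easy'' chains (i)$\Rightarrow$(ii)$\Rightarrow$(iii) and (iv)$\Rightarrow$(v)$\Rightarrow$(vi). The first arrow in each chain holds because the pointwise inequality in $y\in B(x,ct)$ may be averaged against $d\mu(y)$, and the second is H\"older. In the same spirit, inserting $z=x$ in the supremum on the left-hand side of (i), (ii), (iii) yields the trivial vertical implications (i)$\Rightarrow$(iv), (ii)$\Rightarrow$(v), (iii)$\Rightarrow$(vi). This leaves the task of reversing both kinds of arrows.

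For the horizontal reversal within a row I would invoke the maximal function, using that for every $y\in B(x,ct)$
\begin{equation*}
\dashint_{B(x,ct)} g(y')\,d\mu(y')\le Mg(y),\qquad \lf[\dashint_{B(x,ct)} [g(y')]^q\,d\mu(y')\r]^{1/q}\le [M(g^q)(y)]^{1/q}.
\end{equation*}
Since $p>1$ and $q\in[1,p)$, the functions $\wt g:=Mg$ and $\wt g:=[M(g^q)]^{1/q}$ both lie in $L^p(X)$, so substituting them for $g$ in (ii), (iii), (v), or (vi) promotes each of those statements to its left-column analogue (i) or (iv) with a new (but still admissible) control function. For the vertical reversal I would exploit doubling: for every $z\in B(x,t)$, \eqref{te1} gives $B(z,ct)\subset B(x,K(c+1)t)$, and iterating \eqref{te2} a $K$- and $c$-dependent number of times yields $\mu(B(x,K(c+1)t))\ls\mu(B(z,ct))$, whence
\begin{equation*}
\lf[\dashint_{B(z,ct)}[g(y)]^q\,d\mu(y)\r]^{1/q}\ls\lf[\dashint_{B(x,K(c+1)t)}[g(y)]^q\,d\mu(y)\r]^{1/q}.
\end{equation*}
Applying (v) or (vi) at $z\in B(x,t)$ and taking the supremum over such $z$ then produces the sup-versions (ii) or (iii) with the enlarged radius $c'=K(c+1)$. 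Combining all these directions closes the cycles: both rows become equivalent among themselves, and the two rows are linked through the trivial (i)$\Rightarrow$(iv) together with, say, (iv)$\Rightarrow$(v)$\Rightarrow$(ii)$\Rightarrow$(i).

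The main obstacle is the quasi-metric and doubling bookkeeping in the vertical reversal: each use of \eqref{te1} multiplies a radius by $K$, and each enlargement of a ball by a factor $\beta>1$ multiplies its measure by a doubling constant depending on $\beta$ and $\widetilde C$, so one must carefully track how the geometric constants $c$, $C$, and $\wt C$ evolve along the chain while verifying that the resulting $\wt g$ remains an $L^p$-function with the claimed bounds. All remaining steps are either purely formal or rely on the standard $L^r$-boundedness of $M$ on spaces of homogeneous type.
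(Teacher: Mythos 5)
Your proposal is correct, and it follows exactly the route the paper intends: the paper omits the proof of Theorem \ref{tt1}, referring to the analogous \cite[Theorem 3.5]{dgyy}, whose argument is built from the same four mechanisms you list (averaging the pointwise bound in $y$, H\"older's inequality, ball comparison via doubling, and the $L^{p/q}$-boundedness of the Hardy--Littlewood maximal operator applied to $g^q$). Your bookkeeping of the cycle --- both rows internally equivalent via the maximal-function step, linked by the trivial (i)$\Rightarrow$(iv) and the doubling-based (v)$\Rightarrow$(ii) --- closes all implications, so nothing is missing.
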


\begin{theorem}\label{tt2}
Let $\al\in(0,2),\,p\in(1,\fz),\,r\in[1,\fz)$ and $f\in L^1_\loc(X)$. The following statements are equivalent:

{\rm(i)} there exist a non-negative $g\in L^p(X)$ and positive constants
$c$, $C$, $\widetilde{C}$ such that, for all $t\in(0,\fz)$ and almost every $x\in X$,
\begin{equation*}
\lf[\dashint_{B(x,t)} |f(y)-B_{Ct} f(y)|^r\,d\mu(y)\r]^{\frac1r}\le \widetilde{C} t^\al \dashint_{B(x,ct)} g(y)\,d\mu(y);
\end{equation*}

{\rm(ii)} there exist $q\in[1,p)$,
a non-negative $g\in L^p(X)$ and positive constants
$c$, $C$, $\widetilde{C}$ such that, for all $t\in(0,\fz)$ and almost every $x\in X$,
\begin{equation*}
\lf[\dashint_{B(x,t)} |f(y)-B_{Ct} f(y)|^r\,d\mu(y)\r]^{\frac1r}\le \widetilde{C} t^\al \lf\{\dashint_{B(x,ct)} [g(y)]^q\,d\mu(y)\r\}^{1/q};
\end{equation*}

{\rm(iii)} there exist a non-negative $g\in L^p(X)$ and positive constants
$C$, $\widetilde{C}$ such that, for all $t\in(0,\fz)$ and almost every $x\in X$,
\begin{equation*}
\lf[\dashint_{B(x,t)} |f(y)-B_{Ct} f(y)|^r\,d\mu(y)\r]^{\frac1r}\le \widetilde{C}t^\al  g(x);
\end{equation*}

{\rm(iv)} there exist
a non-negative $g\in L^p(X)$ and positive constants
$c$, $C$, $\widetilde{C}$ such that, for all $t\in(0,\fz)$ and almost every $x\in X$ and
$y\in B(x,\,ct)$,
\begin{equation*}
\lf[\dashint_{B(x,t)} |f(z)-B_{Ct} f(z)|^r\,d\mu(z)\r]^{\frac1r}\le \widetilde{C} t^\al g(y).
\end{equation*}
\end{theorem}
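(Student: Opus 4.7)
The plan is to prove the theorem by establishing the cycle of implications
(i) $\Longrightarrow$ (ii) $\Longrightarrow$ (iii) $\Longrightarrow$ (iv) $\Longrightarrow$ (i),
using only the doubling property \eqref{te2} of $\mu$, the quasi-metric inequality \eqref{te1}, and the $L^p$-boundedness of the Hardy-Littlewood maximal function on $(X,\rho,\mu)$ for $p\in(1,\fz)$, which is standard on any space of homogeneous type. All four conditions differ only in the form of the right-hand side controlling the $L^r$-average of $|f-B_{Ct}f|^r$ on $B(x,t)$, so the task essentially reduces to converting between ball averages of $g$, $L^q$-ball averages of $g$, pointwise values of $g$, and values of $g$ at nearby points.

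First I would handle the two routine links. The implication (i) $\Longrightarrow$ (ii) is obtained simply by choosing $q=1$ in (ii), which lies in $[1,p)$ because $p>1$; the same $g$ and constants work. For (ii) $\Longrightarrow$ (iii), I would invoke the pointwise bound $\{\dashint_{B(x,ct)}[g(y)]^q\,d\mu(y)\}^{1/q}\le [M(g^q)(x)]^{1/q}$ and note that, since $q<p$, $g^q\in L^{p/q}(X)$, so the boundedness of $M$ on $L^{p/q}(X)$ gives $\wt g:=[M(g^q)]^{1/q}\in L^p(X)$; substituting this $\wt g$ into (ii) yields (iii) with the same constants. Symmetrically, (iv) $\Longrightarrow$ (i) is immediate by averaging the inequality in (iv) in $y$ over $B(x,ct)$ against the normalized measure on that ball; since the left-hand side does not depend on $y$, this simply replaces $g(y)$ by $\dashint_{B(x,ct)}g(y)\,d\mu(y)$ and produces (i) with the same constants.

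The heart of the argument, and the main obstacle, is (iii) $\Longrightarrow$ (iv). The key idea is that (iii) already provides a bound of the same form \emph{at the point $y$} (playing the role of $x$), once one allows a rescaling of the radius that absorbs the error introduced by moving between nearby balls. For any fixed constant $c>0$ and any $y\in B(x,ct)$, set $s:=K(1+c)t$, where $K$ is the constant in \eqref{te1}. The quasi-triangle inequality gives $B(x,t)\subset B(y,s)$, while the inclusion $B(y,s)\subset B(x,K(s+ct))$ combined with \eqref{te2} yields a positive constant $C_0$, depending only on $c$, $K$ and $\wt C$, such that $\mu(B(y,s))\le C_0\,\mu(B(x,t))$. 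Applying (iii) at the point $y$ with radius $s$ produces
\[
\lf\{\dashint_{B(y,s)}|f-B_{Cs}f|^r\,d\mu\r\}^{1/r}\le \wt C s^\al g(y)\ls t^\al g(y),
\]
and the ball comparison above transfers this bound to $\{\dashint_{B(x,t)}|f-B_{Cs}f|^r\,d\mu\}^{1/r}$, yielding (iv) with the constant $C$ replaced by $CK(1+c)$ in the argument of the inner ball average and with the same choice of $c$. The delicate aspect here is precisely that the inner radius of $B_{Cs}f$ has to be rescaled from $Ct$ to $Cs=CK(1+c)t$ in order for (iii) to be applicable at $y$, which is exactly why the three constants $C$, $\wt C$ and $c$ in the theorem statement are deliberately left unspecified: any choice of them in one of the four formulations corresponds to another choice in the others.
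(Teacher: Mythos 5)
Your proof is correct and complete: the cycle (i)$\Rightarrow$(ii)$\Rightarrow$(iii)$\Rightarrow$(iv)$\Rightarrow$(i) works, with the only nontrivial link, (iii)$\Rightarrow$(iv), handled correctly by applying (iii) at the nearby point $y$ with the enlarged radius $s=K(1+c)t$ and using $B(x,t)\subset B(y,s)$ together with doubling to compare the two ball averages. The paper itself omits the proof, deferring to the analogous argument in \cite[Theorem 3.6]{dgyy}, and your maximal-function and ball-recentering steps are exactly the technique that argument relies on, so this matches the intended proof.
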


The proofs of Theorems \ref{tt1} and \ref{tt2} are similar to
those of \cite[Theorems 3.5 and 3.6]{dgyy}, respectively, the details being omitted.

\begin{remark} It would be very interesting to establish the equivalence
between the items of Theorem \ref{tt1} and those of Theorem \ref{tt2}.
Indeed, it is easy to see that Theorem \ref{tt1}(i) implies
Theorem \ref{tt2}(iv). This means that the items of
Theorem \ref{tt1} imply those of Theorem \ref{tt2}.
It is still unknown whether the items of Theorem \ref{tt2}
imply those of Theorem \ref{tt1} or not.

\end{remark}

\bigskip

\noindent Der-Chen Chang

\medskip

\noindent Department of Mathematics and Department of Computer
Science, Georgetown University, Washington D. C. 20057, U. S. A.

\smallskip

\noindent \&

\smallskip

\noindent Department of Mathematics, Fu Jen Catholic University, Taipei 242, Taiwan, China

\smallskip

\noindent{\it E-mail:} \texttt{chang@georgetown.edu}

\bigskip

\noindent  Jun Liu, Dachun Yang (Corresponding author) and Wen Yuan

\medskip

\noindent  School of Mathematical Sciences, Beijing Normal University,
Laboratory of Mathematics and Complex Systems, Ministry of
Education, Beijing 100875, People's Republic of China

\smallskip

\noindent {\it E-mails}: \texttt{junliu@mail.bnu.edu.cn} (J. Liu)

\hspace{1.1cm}\texttt{dcyang@bnu.edu.cn} (D. Yang)

\hspace{1.1cm}\texttt{wenyuan@bnu.edu.cn} (W. Yuan)

\end{document}